\definecolor{CitePurple}{RGB}{128,9,158}
\definecolor{CiteBlue}{RGB}{2,95,176}
\definecolor{LinkRed}{rgb}{0.7,0,0}
\DeclareMathOperator{\Var}{Var}
\newcommand{\R}{\ensuremath{\mathbb{R}}}
\theoremstyle{plain}
\newtheorem{theorem}{Theorem}[section]
\newtheorem{lemma}[theorem]{Lemma}
\newtheorem{proposition}[theorem]{Proposition}
\theoremstyle{definition}
\newtheorem{definition}[theorem]{Definition}
\newtheorem{problem}[theorem]{Problem}
\newcommand{\SIADSEDIT}[1]{#1}
\begin{document}

\raggedbottom
\thispagestyle{empty}

\title{Efficient evader detection in mobile sensor networks}

\author{Henry Adams$^{1}$}
\address{$^{1}$Department of Mathematics, University of Florida, Gainesville, Florida, USA}

\author{Deepjyoti Ghosh$^{2}$}
\address{$^{2}$Department of Medicine, University of Chicago, Chicago, Illinois, USA}

\author{Clark Mask$^{3}$}
\address{$^{3}$Department of Mathematics, University of Houston, Houston, Texas, USA}

\author{William Ott$^{*,3}$}
\thanks{$^{*}$Corresponding author:
Contact \href{mailto:william.ott.math@gmail.com}{william.ott.math@gmail.com} with inquiries.}

\author{Kyle Williams$^{3}$}

\keywords{Applied topology, collective motion, evasion path, minimal sensing, mobile coverage, mobile sensor network, pursuit-evasion, random coverage}

\subjclass[2020]{37M05, 55U10, 68U05}
\date{\today}

\begin{abstract}
Suppose one wants to monitor a domain with sensors, each sensing a small ball-shaped region, but the domain is hazardous enough that one cannot control the placement of the sensors.
A prohibitively large number of randomly placed sensors could be required to obtain static coverage.
Instead, one can use fewer sensors by providing mobile coverage, a generalization of the static setup wherein every possible evader is detected by the moving sensors in a bounded amount of time.
Here, we use topology in order to implement algorithms certifying mobile coverage that use only local data to solve the global problem.
Our algorithms do not require knowledge of the sensors' locations, only their connectivity information.
We experimentally study the statistics of mobile coverage in two dynamical scenarios.
We allow the sensors to move independently (billiard dynamics and Brownian motion), or to locally coordinate their dynamics (collective animal motion models).
Our detailed simulations show, for example, that collective motion can enhance performance:
The expected time until the mobile sensor network achieves mobile coverage is lower for the D'Orsogna collective motion model than for the billiard motion model.
Further, we show that even when the probability of static coverage is low, all possible evaders can nevertheless be detected relatively quickly by the mobile sensor network.
\end{abstract}

\maketitle

\section{Introduction}

Suppose we would like to monitor a given domain with a network of sensors.
In certain important situations, we may not be able to prescribe exact locations for the sensors.
Fallen rubble might render a domain difficult to access.
Monitoring forest fires requires navigating hazardous domains.
Further, the sensors may have limited capabilities.
We assume that each \SIADSEDIT{sensor} can detect objects only within a ball of small radius.
In such situations, placing the sensors randomly may yield \emph{static coverage}, meaning that the domain is covered at a single moment in time by the union of the sensing balls.
The statistics underlying random static coverage have been extensively studied~\cite{hall1988introduction}.
Surprisingly, we can use topological techniques to verify when a random placement of sensors produces static coverage, even if the sensor positions are not known!
Indeed, the network connectivity of the sensors alone is sufficient input~\cite{Coordinate-free,de2007coverage}.

The number of randomly placed sensors required for static coverage may be prohibitively large.
Further, environmental dynamics, such as those produced by wind or ocean currents, may cause sensors to move.
We therefore study \emph{mobile coverage}, a type of dynamic coverage that often requires significantly fewer sensors than its static counterpart.
Indeed, we consider mobile sensor networks wherein the sensors may move deterministically or stochastically.
By \emph{mobile coverage} we mean that no possible intruder could continuously move in the domain under surveillance without at some point being seen by at least one of the moving sensors.
These sensors may achieve mobile coverage even in the absence of static coverage --- i.e.\ even if the entire domain is never covered at any fixed moment in time by the union of the sensing balls.

Subtle topological tools exist that guarantee mobile coverage, even if the sensors are not able to monitor their position \SIADSEDIT{coordinates~\cite{Coordinate-free,DeSilva2007homological,de2007coverage}}.
Using these tools as a foundation, we initiate the study of the statistics of mobile coverage in minimal mobile sensor networks.
By \emph{minimal}, we mean that each sensor measures only a small amount of \SIADSEDIT{local} information, and in particular the sensors do not measure their locations.
As a consequence, we must verify mobile coverage using only local data.
This local data consists of connectivity information (overlapping sensors can detect each other), weak angular data (sensors can measure cyclic orderings of neighbors), and distance data (each sensor can measure distances to nearby detected sensors).
We address the following statistical questions.
Given randomly placed sensors which then move deterministically or stochastically in a domain, how do the statistics of the (random) \emph{mobile coverage time} $T_{\max}$, the time at which mobile coverage is achieved, behave?
Additionally, how do the mean and variance of $T_{\max}$ depend on the sensor dynamics?

We provide concrete algorithms that certify mobile coverage and use them to examine several models of sensor motion; see \cite{OurGitHubCode} for code and documentation.
These models fall into one of two categories --- either the sensors move independently, or they locally coordinate their dynamics.
We examine two models wherein the sensors move independently, one deterministic (billiard motion) and one stochastic (Brownian motion).
Inspired by the emergent dynamics associated with collective animal motion, we study the seminal D'Orsogna model~\cite{DOrsogna2006self} as well.
In this model, sensors use local attractive and repulsive forces to coordinate motion.

As an example application, suppose that deploying a sensor requires a fixed cost, and that having the sensors turned on while roving requires a marginal cost.
What is the cheapest way to achieve coverage?
Is it cheaper to try and achieve static coverage, by deploying enough randomly-placed sensors to cover the domain at the start time with high probability?
Or, is it cheaper to deploy fewer sensors, which likely will not cover the domain initially, but will provide mobile coverage after a certain duration of wandering?
How many mobile sensors do we expect to be the most cost-efficient?
Our simulations aim to allow us to answer such questions.

We survey related work in Section~\ref{sec:related}, discuss mobile coverage problems and theoretical tools in Section~\ref{sec:mobile}, show the alpha complex can be computed using local information in Section~\ref{section:alpha-local}, and introduce our algorithms in Sections~\ref{sec:algorithm} and~\ref{sec:realization}.
We examine how mobile sensor network performance statistics depend on the dynamics of the mobile sensors in Sections~\ref{sec:stochastic} and~\ref{sec:deterministic}.
We finish with open questions and concluding remarks in Section~\ref{sec:conclusion}.

\section{Related Work}
\label{sec:related}

\SIADSEDIT{We discuss related work on static coverage, on a variety of mobile sensor network models, on topological approaches to minimal sensing, on the minimal sensing problem we study in this paper, on Reeb graphs, and on further extensions to the evasion problem.}

\subsection{Static coverage}

What is the probability that immobile ball-shaped sensors cover a domain?
This is equivalent to asking what is the probability that mobile ball-shaped sensors cover a domain at time zero.
These static coverage questions have a long history in statistics and mathematics.
Sections~1.6 and 3.7 of~\cite{hall1988introduction} discuss both upper and lower bounds on the probability of coverage by $n$ sensors, and ~\cite{Schroeter1984distribution} addresses moment problems related to static coverage.
Exact formulas for the probability of coverage by a fixed number of sensors are typically unknown.
Instead, it is common to take a limit where the number of sensors goes to infinity while their radii go to zero.
Kellerer~\cite{Kellerer1983number} considers the expected number of connected components of a random sample of balls, as well as the expected Euler characteristic.
For connections between static coverage and the properties of convex hulls of multivariate samples, see~\cite{Moran1974volume}.

\subsection{A panoply of mobile sensor network models}

In this paper, we study a mobile sensor network problem.
Before we describe the problem of interest, let us survey a few of the variants on mobile sensor network problems that have been considered.

There are a wide variety of pursuit-evasion mobile sensor network problems.
Liu, Dousse, Nain, and Towsley~\cite{Liu2013dynamic} require that the evader remains in a given sensing ball for a certain amount of time before detection occurs.
They show in this context that sensors should not move too quickly if we wish to optimize mobile sensor network performance.
By contrast, in the evasion problem we will consider, we deem an evader to have been detected at the instant it enters a sensing ball.
One could study \emph{intelligent} sensor networks wherein sensor trajectories actively respond to evader dynamics.
For example,~\cite{Chin2010detection, Liu2005mobility, Liu2013dynamic} study game-theoretic approaches for the optimal evasion and detection strategies in the setting when both evaders and sensors are intelligent.
However, in the evasion problem we consider, the sensor motion and evader motion will be independent.
One could impose constraints on evader motion (such as bounded speed) and then ask what is the optimal evasion strategy, given these constraints.
We will assume that evaders can move arbitrarily fast, provided their motion remains continuous.
We refer the reader to~\cite{Chung2011search} for a taxonomy of many different of pursuit-evasion problems.

A variety of ideas have been harnessed to analyze and improve the effectiveness of mobile sensor networks.
From a control theory perspective, flocking algorithms have been combined with Kalman filters for efficient target tracking~\cite{La2009flocking, Olfati-Saber2006flocking, Olfati-Saber2007distributed, Olfati-Saber2007consensus, Olfati-Saber2012coupled, Su2016distributed, Su2017distributed}.
The collective behavior of the ant species \emph{Temnothorax albipennis} has inspired distributed coordination algorithms for tracking~\cite{Yuan2019Temnothorax}.
Neural networks can learn \emph{a priori} unknown environments while dynamic coverage is achieved~\cite{Qu2014finite}.

\subsection{\SIADSEDIT{Topological approaches to minimal sensing problems}}

In minimal sensing, one tries to use sensors measuring only local data in order to solve a global problem.
Topological techniques naturally facilitate local-to-global analysis.
See~\cite{DeSilva2007homological} for an overview of topological approaches to minimal sensing problems.

Two of the papers introducing coverage problems in sensor networks from the topological perspective are by de Silva and Ghrist.
In~\cite{Coordinate-free}, the authors consider a wide variety of sensor models (static sensors, mobile sensors) and problems (detecting static coverage, detecting mobile coverage, identifying and turning off redundant sensors).
They show how coordinate-free connectivity data, when combined with homology, can address this diverse spectrum of problems.
In~\cite{de2007coverage}, the authors incorporate persistent homology in order to obtain more refined coverage guarantees.

\subsection{\SIADSEDIT{The evasion problem}}

The mobile sensor coverage we consider in this paper is derived from Section~11 of~\cite{Coordinate-free}.
In this section, de Silva and Ghrist derive a one-sided criterion using relative homology, which can guarantee the existence of mobile coverage, but which cannot be used to show the nonexistence thereof.
In~\cite{EvasionPaths}, Adams and Carlsson develop a streaming version of this one-sided criterion using zigzag persistent homology.
Zigzag persistence can also be used for hole identification, tracking, and classification in coordinate-free mobile sensor networks~\cite{Gamble2012applied}.

In the static setting, connectivity data alone determines if the region is covered or not~\cite{Coordinate-free,de2007coverage}.
However, as Adams and Carlsson show, time-varying connectivity data alone (and hence zigzag persistence) cannot determine whether or not mobile coverage exists~\cite{EvasionPaths}.
As a result, Adams and Carlsson augment the connectivity data by considering planar sensors that also measure the cyclic orderings of their overlapping neighbors, as well as distances to these neighbors.
This is the type of minimal mobile sensor networks we study in this paper.

\subsection{\SIADSEDIT{Connections to Reeb graphs}}

One can model a mobile sensor network as two sets in spacetime: a covered region and an uncovered region.
The map to time can be thought of as a real-valued function, whose domain we often restrict to be only the uncovered region.
The \emph{Reeb graph} of the uncovered region, equipped with this real-valued map to time, encodes the space of possible (undetected) intruder motions.
From this Reeb graph, it is easy to determine whether or not an evasion path exists.
Our algorithms in this paper allow us to not only detect whether or not an evasion path exists, but also to compute the entire Reeb graph of the uncovered region.
\SIADSEDIT{Though there are a wide variety of algorithms for computing Reeb graphs~\cite{biasotti2008reeb,pascucci2007robust}, they typically focus on computing the Reeb graph of the measured space (the covered region), where in this paper we compute the Reeb graph of the complement of this space (the uncovered region).}

\subsection{\SIADSEDIT{Further extensions to the evasion problem}}

In a more theoretical direction, the papers~\cite{ghrist2017positive,carlsson2020space} give if-and-only-if criteria for the existence of an evasion path using positive homology and Morse theory, but do not address how minimal sensors would measure the inputs required for these criteria.
For example, the input required to compute the example in Section~6 of~\cite{ghrist2017positive} is enough information to instead directly compute the Reeb graph of the uncovered region.
The papers~\cite{AdamsThesis,EvasionPaths,carlsson2020space} raise and partially address the problem of computing the entire \textit{space} of evasion paths, which in~\cite{distributed} is implemented for planar sensors that can measure cyclic orderings of and local distances to nearby sensors.
Though~\cite{distributed} assumes that each sensor knows its positional coordinates, this assumption can be slightly weakened: if instead each sensor only knows which sensors it overlaps with, and the exact distances to overlapping sensors, then this is enough information to compute alpha complexes \SIADSEDIT{(see Section~\ref{section:alpha-local})}.

\section{Mobile coverage problems and theoretical tools}
\label{sec:mobile}

Let $D\subseteq \R^{2}$ be a compact domain that is homeomorphic to a closed disk.
We assume that the boundary $\partial D$ is piecewise-smooth.
Let $X\subseteq D$ be a finite set of sensor locations.
We equip the sensors with a sensing radius $r$, within which they can detect intruders and other sensors.
More precisely, each sensor $x\in X$ covers a ball $B_r(x)=\{y\in\R^{2} : \|y-x\| \leqslant r\}$.
Sensor $x$ immediately detects any intruder or sensor inside this sensing ball.
We further assume that there exists a subset of immobile \emph{fence} sensors, $F\subseteq X$, that cover the boundary of the domain.
In symbols, we require $\partial D \subseteq \cup_{x \in F} B_{r}(x)$.
We may think of these fence sensors as being on the boundary, though this is not strictly necessary.

The \emph{covered region} of the sensors is the set $C = D \cap (\cup_{x \in X} B_r(x))$.
The static coverage problem asks if $D = C$ (equivalently, if $D \setminus C = \emptyset $).
We do not need to know the coordinates of the sensors in order to address this problem.
Indeed, using only sensor connectivity data as input, de Silva and Ghrist~\cite{Coordinate-free,de2007coverage} formulate homological conditions that determine static coverage.

\subsection{Mobile coverage}

This paper focuses on mobile coverage problems, wherein sensors are allowed to move.
We modify the static setup as follows.
Let $I=[0,T]$ parametrize time, for some fixed maximum value $T>0$.
Each sensor now traces a continuous path $x \colon I\to D$.
Let $\mathscr{X}$ denote the collection of these paths.
As in the static case, we assume there exists a subset of $\mathscr{X}$ corresponding to fence sensors.
The fence sensors are immobile, meaning $x(t)=x(t')$ for all $t,t'\in I$ if $x$ is a fence sensor.
We continue to assume that the boundary $\partial D$ is covered by the sensing balls of these (static) fence sensors.

We are interested in the ability of the mobile sensor network to detect all possible continuously moving intruders.

\begin{definition}
The \emph{time-varying covered region} $C(t)$ is the union of the sensor balls at time $t$, namely $C(t)= D \cap (\cup_{x \in \mathscr{X}} B_r(x(t)))$ for $t \in I$.
An \emph{evasion path} in the mobile sensor network is a continuous path in the uncovered region, i.e.\ a continuous map $p\colon I\to D$ with $p(t)\notin C(t)$ for all $t \in I$.
We say that a mobile sensor network exhibits \emph{mobile coverage} on $D \times I$ if no evasion path exists.
\end{definition}

Note that we place no restrictions on the modulus of continuity of an evasion path (we do not assume H\"{o}lder or Lipschitz continuity, for instance).
In particular, we place no bound on the speed at which a potential evader may travel, although we do require it to move continuously.
An intruder successfully avoids detection by moving continuously and never falling into the time-varying covered region.
Our problem of mobile coverage is really a question about the existence of evasion paths.

\begin{problem}\label{p:evasion-path-problem}
Fix $T > 0$.
Given sensor paths $(x_{n})$, determine if an evasion path exists on $D \times [0,T]$.
\end{problem}

We extend Problem~\ref{p:evasion-path-problem} by not fixing $T$ in advance, but rather asking for the maximal time interval over which an evasion path exists.

\begin{problem}\label{p:max-T}
Given sensor paths $(x_{n})$, find the \emph{detection time}
\begin{equation*}
T_{\max} = \sup \{ T \geqslant 0 : \text{there exists an evasion path on } D \times [0,T] \}.
\end{equation*}
We note that $T_{\max}$ might be infinite.
\end{problem}

These mobile coverage problems motivate our two primary goals.
First, we want to develop efficient algorithms that solve these mobile coverage problems using as little information as possible about the sensor paths.
Our second goal is to understand how the statistics of this detection time depend on sensor dynamics, the number of sensors, the sensing radius, and the geometry of the domain $D$.

Suppose that each sensor can measure the cyclic ordering of its neighbors.
If the sensor network remains connected at each time and if the sensors can measure the time-varying \emph{alpha complex}, then there exists an if-and-only-if criterion for the existence of an evasion path~\cite[Theorem~3]{EvasionPaths}.
The paper~\cite{EvasionPaths} also outlines a constructive method for determining the existence of these evasion paths.
We now give the necessary background material to make this algorithm explicit.

\subsection{The alpha complex}
\label{ssec:alpha}

Fix a finite set $X$ of points in $\R^2$ (the sensor positions at a fixed time).
For any $x\in X$, the \emph{Voronoi region} $V_x\subseteq \R^2$ is defined as
\begin{equation*}
V_x=\left\{y\in \R^2 : \|y-x\| \leqslant \|y-x'\| \text{ for all }x'\in X\setminus\{x\} \right\}.
\end{equation*}
That is, the Voronoi region about $x$ contains all points in the plane that are at least as close to $x$ as they are to any other point in $X$.
For example, if $X=\{x,x'\}$ consists of two points in the plane, then each Voronoi region will be a half-space, with the perpendicular bisector of the line segment $xx'$ as the line dividing the two half-spaces.

\begin{figure}[ht]
\centering
\includegraphics[width=9cm]{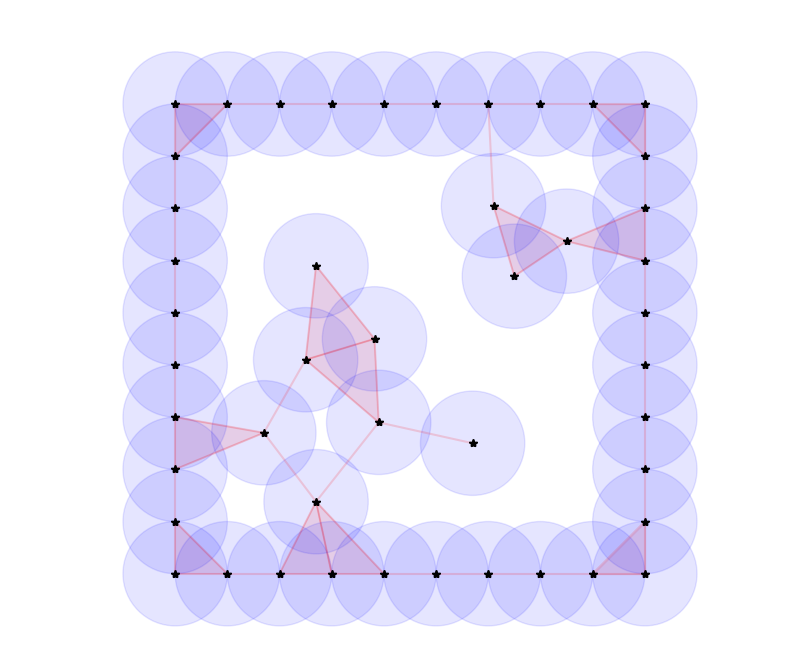}
\caption{
\textbf{Alpha complex produced by a mobile sensor network at a fixed time.}
Black stars denote sensor positions and shaded disks depict sensing balls.
Line segments and shaded triangles represent 1-simplices and 2-simplices in the alpha complex, respectively.}
\label{fig:alpha}
\end{figure}

Each sensor $x\in X$ covers a closed ball $B_r(x)$ of radius $r$.
The alpha complex of $X$ is defined as follows~\cite{EdelsbrunnerHarer,edelsbrunner1994three}.
Consider the convex regions $B_r(x)\cap V_x$ obtained by intersecting the closed ball of radius $r$ about $x$ with the Voronoi region corresponding to $x$.
The \emph{alpha complex} is a simplicial complex, with vertex set $X$, that contains a $(k-1)$-simplex if the corresponding $k$ convex regions $B_r(x)\cap V_x$ have a point of mutual intersection; see Figure~\ref{fig:alpha} for an example.
In particular, we have a vertex (a 0-simplex) in the alpha complex for each convex region $B_r(x)\cap V_x$, i.e.\ we have a vertex for each point $x\in X$.
We have an edge (a 1-simplex) $xx'$ in the alpha complex whenever two convex regions $B_r(x)\cap V_x$ and $B_r(x')\cap V_{x'}$ intersect.
When three such regions share a point of intersection, we have a triangle (a 2-simplex) in the alpha complex.
By the nerve lemma~\cite[Corollary~4G.3]{Hatcher}, the alpha complex is homotopy equivalent to the union of the convex regions $B_r(x)\cap V_{x}$, which is the same space as the union of the sensing balls $B_r(x)$ (the covered region).

\SIADSEDIT{We prove in Section~\ref{section:alpha-local} that the} alpha complex can be determined from only the pairwise distances between sensors whose corresponding sensing balls overlap.
This is local data --- we do not need to know the distances between non-overlapping sensing balls in order to compute the alpha complex.
Hence we can use alpha complexes in minimal sensing algorithms.

\subsection{\SIADSEDIT{Ribbon graphs} and boundary cycles}
\label{ssec:boundary-cycles}

A \emph{\SIADSEDIT{ribbon graph}}~\cite{igusa2002higher,mohargraphs} is a graph equipped with a cyclic ordering of the directed edges about each of its vertices.
This information is referred to as \emph{rotation data} or \emph{weak rotation data}: it is ``weak'' in the sense that exact angles are not needed.
Using the cyclic ordering, the directed edges can be partitioned into closed loops known as \emph{boundary cycles}.
These boundary cycles will be used to identify connected components of the uncovered region.
This translation from cyclic information about vertices to boundary cycles is useful in our study of mobile planar sensor networks for the following reason: cyclic orderings of neighbors can be detected by sensors with minimal capabilities because this information is local in nature.
For example, each sensor could use a rotating radar tower to order its neighbors.
By contrast, boundary cycles are global in nature, and some of these boundary cycles correspond to holes in the sensor network.
Therefore, by using the translation from rotation information to boundary cycles, we are able to convert local data measured by sensors into global data representing the connected components of the uncovered region.

\begin{figure}[ht]
\centering
\includegraphics[width=1\textwidth]{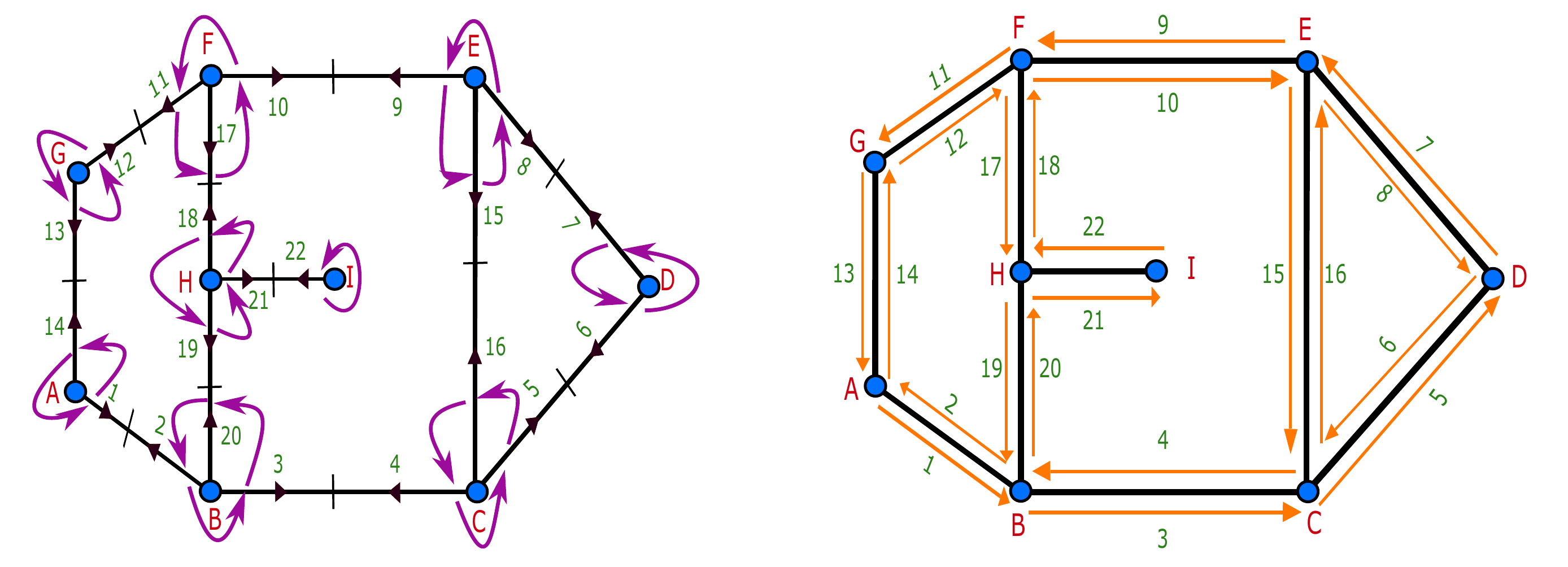}
\caption{
An example \SIADSEDIT{ribbon graph} (left) along with its boundary cycles (right).
}
\label{fig:boundary-cycles}
\end{figure}

Figure~\ref{fig:boundary-cycles} (left) depicts a sample \SIADSEDIT{ribbon graph} equipped with labels on its vertices and half-edges.\footnote{
Using the word ``half-edge'' instead of ``directed edge'' connotes that the edge will appear with both orientations.
}
Each half-edge is oriented.
For example, $1$ denotes the half-edge from vertex $A$ to $B$, whereas $2$ denotes the half-edge from $B$ to $A$.
Given a \SIADSEDIT{ribbon graph}, we have two bijections $\alpha$ and $\sigma$ to and from the set of half-edges on the \SIADSEDIT{ribbon graph}: $\alpha$ switches the two half edges corresponding to each edge, and $\sigma$ returns the next half-edge about a vertex in a counter-clockwise direction.
In Figure~\ref{fig:boundary-cycles}, for instance, we have $\alpha(1)=2$ and $\alpha(2)=1$, $\alpha(3)=4$ and $\alpha(4)=3$, et cetera.
At vertex $B$, we have $\sigma(3)=20$, $\sigma(20)=2$, and $\sigma(2)=3$.
The \emph{boundary cycles} of a \SIADSEDIT{ribbon graph} are the orbits of the composition $\varphi=\sigma \circ \alpha$.
In our example, we compute that $\varphi(2)=14$, $\varphi(14)=12$, $\varphi(12)=17$, $\varphi(17)=19$, and $\varphi(19)=2$, so one of the boundary cycles is the orbit $(2,14,12,17,19)$.
The other three boundary cycles in this example are $(4,20,21,22,18,10,15)$, $(6,16,8)$, and $(1,3,5,7,9,11,13)$; see Figure~\ref{fig:boundary-cycles} (right).

We note that each boundary cycle corresponds to a connected component of the complement of the graph~\cite{EvasionPaths,vijayan1982planarity}.
As a consequence, each boundary cycle corresponds either to a connected component of the complement of the alpha complex, or alternatively to a 2-simplex of the alpha complex.
Therefore, the boundary cycles will allow us to identify the Reeb graph of the uncovered region in spacetime.

\subsection{The Reeb graph}

Let $C(t)\subseteq D$ be the \SIADSEDIT{\textit{covered region at time} $t$}, and let its complement $U(t)= D\setminus C(t)$ be the \SIADSEDIT{\textit{uncovered region at time} $t$}.
In spacetime $D \times I$, let $\mathcal{C} = \cup_{t\in I}(C(t)\times\{t\})$ denote the \SIADSEDIT{\textit{covered region of spacetime}}, and let $\mathcal{U} = \cup_{t\in I}(U(t)\times\{t\})$ denote the \SIADSEDIT{\textit{uncovered region of spacetime}}.

\SIADSEDIT{When we use the terms covered region and uncovered region, we are referring to either a subset of space or to a subset of spacetime, depending on the implied context.}

Let $\pi \colon \mathcal{U} \to I$ be the map from the uncovered region to time, given by $\pi(x,t)=t$.
The \emph{Reeb graph} of this map $\pi$ encodes how the connected components of the uncovered region vary with time: how they split, merge, are born, and die.
The goal in many mobile evasion problems can be recast as finding the Reeb graph of $\pi$, since from the Reeb graph, we can read off all possible evasive motions moving forward in time.

\begin{figure}[ht]
\centering
\includegraphics[width=6cm]{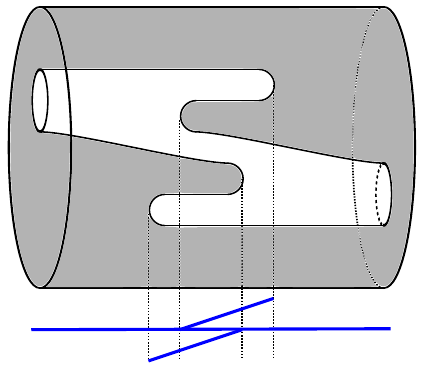}
\hspace{10mm}
\includegraphics[width=6cm]{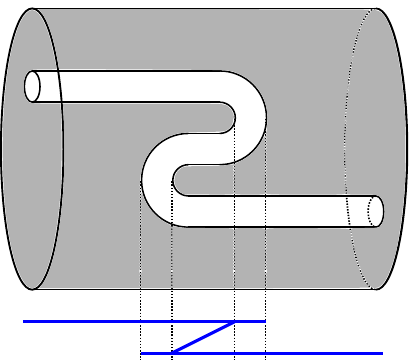}
\caption{
\textbf{Spacetime representations of two mobile sensor networks.}
Covered regions are gray and uncovered regions are white, with time increasing from left to right.
Each Reeb graph of the projection map $\pi $ from the uncovered region $\mathcal{U}$ to time $I$ appears in blue underneath the spacetime representation.
}
\label{fig:Reeb}
\end{figure}

Formally, we define the Reeb graph as follows.
Define an equivalence relation on $\mathcal{U}$ by declaring $(x,t)\sim (x',t)$ whenever $x$ and $x'$ belong to the same connected component of $\pi^{-1}(t)$.
In particular, if $t\neq t'$ then necessarily $(x,t)\not\sim(x',t')$.
The \emph{Reeb graph} of $\pi$ is the quotient space $\mathcal{U}/\sim$.
See Figure~\ref{fig:Reeb} for two examples.

In a mobile sensor network, each boundary cycle of the 1-skeleton of the alpha complex at time $t$ corresponds either to a connected component of the uncovered region $U(t)$, or alternatively to a 2-simplex of the alpha complex (i.e.\ a region of space covered by sensing domains).
In solving Problems~\ref{p:evasion-path-problem} and~\ref{p:max-T}, we will track the boundary cycles as they are born, split, evolve, merge, and die.
We will maintain labels on boundary cycles over time, encoding whether or not they may contain an intruder.
Unseen intruders can never be present in a boundary cycle corresponding to a 2-simplex, and they may or may not be present in other boundary cycles depending on the time-evolution of the network.
By tracking the boundary cycles, we will actually be tracking the connected components of $U(t)$ as time $t$ varies.
The algorithms presented in this paper are thus equivalent to computing the Reeb graph of \SIADSEDIT{$\pi\colon \mathcal{U}\to I$, using only information about the covered region $\mathcal{C}=(D\times I)\setminus \mathcal{U}$}.

\section{Local distances determine the alpha complex}
\label{section:alpha-local}

\SIADSEDIT{Since we wish to use alpha complexes in the minimal mobile sensing algorithms we develop, we must show that the alpha complex can be determined from local data.
In this section, we show that the alpha complex can be determined from only the pairwise distances between sensors whose corresponding sensing balls overlap.}

Let $X = \{x_1, x_2, \ldots, x_n \} \subseteq \mathbb{R}^m$ be a collection of $n$ points in general position.
Let $A(X,r)$ be the alpha complex on $X$ with radius parameter $r >0$, as defined in Section~\ref{ssec:alpha} for the particular case $m=2$.
The following proposition says that if one knows only the local distances between points in $X$ (those distances between points whose closed balls of radius $r$ overlap), then this knowledge is sufficient to compute the alpha complex $A(X,r)$.

\begin{proposition}
\label{prop:alpha-local}
Suppose that for each $i, j \in \{1, \ldots,  n\}$ we know
\begin{itemize}
    \item whether or not $\|x_i-x_j\| \leq 2r$, and
    \item the value of $\|x_i-x_j\|$ if $\|x_i-x_j\| \leq 2r$.
\end{itemize}
Then we can determine the alpha complex $A(X,r)$.
\end{proposition}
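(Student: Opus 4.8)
The plan is to decide, for each candidate simplex separately, whether it lies in $A(X,r)$, and to show that each such decision depends only on the given local data. Since $A(X,r)$ is the nerve of the convex regions $B_r(x)\cap V_x$, and these are convex subsets of $\mathbb{R}^m$, in general position $A(X,r)$ is a subcomplex of the Delaunay triangulation, so its simplices have at most $m+1$ vertices; it therefore suffices to decide membership of every subset $S\subseteq X$ with $|S|\le m+1$. First I would record the elementary but crucial locality of vertex sets: if $\sigma$ with vertex set $S$ lies in $A(X,r)$, then there is a common point $p\in\bigcap_{x\in S}(B_r(x)\cap V_x)$, whence $\|x-x'\|\le\|x-p\|+\|p-x'\|\le 2r$ for all $x,x'\in S$. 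Thus only subsets all of whose pairwise distances are $\le 2r$ can be simplices, and for every such candidate the full intra-$S$ distance matrix is known; by classical Gram-matrix reconstruction this pins down $S$ up to congruence.

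Next I would rewrite the membership predicate in a form amenable to a locality argument. Unwinding the definitions, $p\in\bigcap_{x\in S}(B_r(x)\cap V_x)$ forces $\|p-x\|$ to equal a common value $\rho\le r$ for all $x\in S$, together with $\|p-y\|\ge\rho$ for every $y\in X$. Hence $\sigma\in A(X,r)$ if and only if there is an empty circumscribing ball of radius at most $r$: a point $p$ and radius $\rho\le r$ with $\|p-x\|=\rho$ for all $x\in S$ and $\|p-y\|\ge\rho$ for all $y\in X\setminus S$.

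The heart of the argument is to localize the emptiness test. Suppose some $y$ violates emptiness for an admissible center, i.e.\ $\|p-y\|<\rho\le r$; then for every $x\in S$ we get $\|x-y\|\le\|x-p\|+\|p-y\|<2\rho\le 2r$. Consequently the only points that can ever obstruct an admissible ball lie in the local neighborhood $N(S)=\{y\in X:\|y-x\|\le 2r\text{ for all }x\in S\}$, and all distances from $N(S)$ to $S$ are known. Therefore $\sigma\in A(X,r)$ if and only if there is a ball of radius $\le r$ through $S$ that is empty of $N(S)$, a condition referring only to $S\cup N(S)$.

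It remains to decide this localized condition from the partial distance matrix on $S\cup N(S)$, and this is the step I expect to be the main obstacle. We know every distance from $S$ to $N(S)$ and, among points of $N(S)$, exactly those distances that are $\le 2r$; crucially, the data also records which pairs \emph{exceed} $2r$. The difficulty is that in codimension $\ge 1$ the position of a point $y\in N(S)$ relative to the affine span of $S$ is determined only up to reflection by its distances to $S$, so a priori the configuration, and hence emptiness, might depend on unknown relative placements. The resolution is exactly the co-obstruction bound from the previous step: any two points lying inside a common admissible ball are within $2r$ of each other, so whenever two obstructors could simultaneously block a given center their mutual distance, and thus together with their distances to $S$ their full relative configuration up to congruence, is recorded in the data; pairs recorded as being more than $2r$ apart can never co-obstruct a single admissible ball. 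I would use this to show that the admissible emptiness test is invariant across all placements consistent with the data, a single obstructor contributing a constraint unchanged by its residual reflection, and jointly relevant obstructors having known relative positions. This establishes that membership of $\sigma$ is a function of the local data alone and completes the proof.
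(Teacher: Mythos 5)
Your localization steps are sound and are the same triangle-inequality observations the paper relies on: a witness point $p\in\bigcap_{x\in S}\bigl(B_r(x)\cap V_x\bigr)$ is equidistant from the vertices of $S$ at some common radius $\rho\le r$, all pairwise distances within a candidate simplex are therefore at most $2r$, and any obstructor of such a witness lies within $2r$ of every vertex of $S$. The gap is in the final step, which you yourself flag as the main obstacle. Your membership criterion asks whether the set of admissible centers --- a ball $B$ centered at the circumcenter $c$ inside the flat $L$ of points equidistant from $S$ --- is entirely covered by the union of the open half-spaces $H_y=\{p\in L:\|p-y\|<\|p-x\|\}$ over all obstructors $y$. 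This is a joint condition on all obstructors at once, and your resolution controls only single obstructors (reflection invariance) and co-obstructing pairs (whose mutual distance is recorded). Covering a ball by a union of half-spaces is not a pairwise phenomenon: already when $\dim L=2$ one can have $B\subseteq H_{y_1}\cup H_{y_2}\cup H_{y_3}$ with $H_{y_1}\cap H_{y_3}\cap B=\emptyset$, so $\|y_1-y_3\|$ may exceed $2r$ and be absent from the data; knowing the congruence classes of $S\cup\{y_1,y_2\}$ and $S\cup\{y_2,y_3\}$ separately leaves a residual reflection of $y_3$ relative to $y_1$ unresolved, and you have not shown that the covering property is independent of that choice. (For $m=2$ the relevant flats $L$ are at most one-dimensional and connectedness of the segment $B$ can rescue the argument, but the proposition is stated for general $\mathbb{R}^m$.)

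The paper avoids this difficulty entirely by citing a characterization of the alpha complex due to Kerber: $\sigma\in A(X,r)$ if and only if $\sigma$ is, or is a face of, a simplex that is \emph{short} (circumradius at most $r$) and \emph{Gabriel} (circumball containing no point of $X$ in its interior). Because the circumball is one canonical ball whose center lies in the affine hull of $\sigma$, each potential obstructor $x$ is tested one at a time: $\|x-c\|$ is a congruence invariant of $\sigma\cup\{x\}$, unaffected by the reflection ambiguity across that hull, and no joint configuration of several obstructors ever enters. If you want to keep your witness-ball formulation, the cleanest repair is to prove or cite this short-and-Gabriel equivalence and then run your locality bounds against the circumball rather than against arbitrary admissible balls.
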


To prove this claim we need some definitions and a lemma from~\cite{kerber20133d} (see also~\cite{chintakunta2013distributed} for related ideas).
Let $\sigma$ be a $k$-simplex with vertices in $X \subseteq \R^m$.
Due to our general position assumption, there is a unique $(k-1)$-dimensional sphere in the $k$-plane containing $\sigma$ that circumscribes $\sigma$.
Let the center and radius of this sphere be the {\em circumcenter} and {\em circumradius} of $\sigma$.
Let the $m$-dimensional ball with this center and radius be the {\em circumball} of $\sigma$.

\begin{definition}
A simplex $\sigma$ is {\em short} if its circumradius is at most $r$.
\end{definition}

\begin{definition}
A simplex $\sigma$ is {\em Gabriel} if its circumball has no point of $X$ in its interior.
\end{definition}

\begin{lemma}[\cite{kerber20133d}]
\label{lem:short-gabriel}
A simplex $\sigma$ is in the alpha complex $A(X,r)$ if and only if it is
\begin{itemize}
    \item short and Gabriel, or
    \item the face of a simplex that is short and Gabriel.
    \end{itemize}
\end{lemma}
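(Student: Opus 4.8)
The plan is to recast membership in $A(X,r)$ as a statement about the Voronoi face dual to $\sigma$ and the position of its circumcenter. Write $S$ for the vertex set of $\sigma$, let $E_S = \{q : \|q-x\| = \|q-x'\| \text{ for all } x,x'\in S\}$ be the flat of points equidistant from the vertices of $\sigma$, and let $F_\sigma = \bigcap_{x\in S} V_x \subseteq E_S$ be the dual Voronoi face. The circumcenter $c_\sigma$ is the unique point of $E_S \cap \operatorname{aff}(S)$, and for every $q\in E_S$ and $x\in S$ one has the Pythagorean identity $\|q-x\|^2 = \rho_\sigma^2 + \|q-c_\sigma\|^2$, where $\rho_\sigma$ is the circumradius. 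Unwinding the definition of $A(X,r)$, a point $q$ lies in $\bigcap_{x\in S}(B_r(x)\cap V_x)$ if and only if $q\in F_\sigma$ and $\operatorname{dist}(q,X) = \|q-x\| \le r$, since on $F_\sigma$ the vertices of $S$ realize the nearest-neighbor distance. Thus $\sigma\in A(X,r)$ iff $\mu(\sigma) := \min_{q\in F_\sigma}\operatorname{dist}(q,X) \le r$, and by the Pythagorean identity this minimum is attained at the point of $F_\sigma$ closest to $c_\sigma$. I would then record the key observation that $\sigma$ is Gabriel exactly when $c_\sigma\in F_\sigma$: membership $c_\sigma\in V_x$ for each $x\in S$ says precisely that $\rho_\sigma = \|c_\sigma-x\| \le \|c_\sigma-y\|$ for all $y\in X$, i.e.\ that no point of $X$ lies strictly inside the circumball; in that case $\mu(\sigma) = \rho_\sigma$.

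\textbf{The easy direction.} If $\sigma$ is short and Gabriel, then $c_\sigma\in F_\sigma$ and $\rho_\sigma\le r$, so $c_\sigma\in\bigcap_{x\in S}(B_r(x)\cap V_x)$ and hence $\sigma\in A(X,r)$. If instead $\sigma$ is a proper face of a short-and-Gabriel simplex $\tau$, then $\tau\in A(X,r)$ by the previous sentence, and since $A(X,r)$ is closed under taking faces (the intersection defining membership only enlarges as one passes to a face), we again obtain $\sigma\in A(X,r)$.

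\textbf{The converse, by descending through Voronoi faces.} Assume $\sigma\in A(X,r)$, so $\mu(\sigma)\le r$, and let $p$ be the unique minimizer of $\operatorname{dist}(\cdot,X)$ over $F_\sigma$. Let $T$ be the nearest-neighbor set of $p$ in $X$ and $\tau=\operatorname{conv}(T)$; since $p\in F_\sigma$ we have $S\subseteq T$, so $\sigma$ is a face of $\tau$ and $p\in F_\tau\subseteq F_\sigma$. Because $F_\tau\subseteq E_T$, the circumradius satisfies $\rho_\tau \le \|p-z\| = \operatorname{dist}(p,X) = \mu(\sigma)\le r$, so $\tau$ is short; and because $p$ minimizes $\operatorname{dist}(\cdot,X)$ over the larger set $F_\sigma\supseteq F_\tau$, it also minimizes it over $F_\tau$, whence $\mu(\tau)=\mu(\sigma)$. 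If $\tau$ is Gabriel we are done. If not, then $c_\tau\notin F_\tau$, so the nearest point of the closed convex set $F_\tau$ to the external point $c_\tau$ lies on the relative boundary of $F_\tau$; but over $F_\tau$ minimizing distance to $c_\tau$ coincides with minimizing $\operatorname{dist}(\cdot,X)$, so $p$ is exactly that nearest point and therefore lies in a strictly lower-dimensional Voronoi face, i.e.\ its nearest-neighbor set strictly enlarges. Replacing $\tau$ by this larger coface and repeating, the dimension of the Voronoi face strictly drops at each step while shortness and the value $\mu$ are preserved; the process must terminate, and it can only terminate at a Gabriel simplex (in the extreme case the Voronoi face is a single vertex, forcing the coface to be top-dimensional with $c_\tau = p\in F_\tau$). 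This yields a short-and-Gabriel coface $\tau\supseteq\sigma$.

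\textbf{Expected obstacle.} The main difficulty is the converse, specifically making the descent rigorous: one must check that the single minimizer $p$ persists as the constrained minimizer over each successive Voronoi face, that failure of the Gabriel property genuinely pushes $p$ onto a strictly lower-dimensional face (so the nearest-neighbor set strictly grows), and that shortness together with the value $\mu$ is preserved along the chain so that the terminal Gabriel simplex is still short. Throughout, the general-position hypothesis on $X$ is what guarantees unique circumspheres, affinely independent nearest-neighbor sets, and the fact that the relative boundary of a Voronoi face decomposes into Voronoi faces dual to strictly higher-dimensional Delaunay simplices.
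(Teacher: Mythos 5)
The paper does not actually prove this lemma: it is imported verbatim from~\cite{kerber20133d} and used as a black box in the proof of Proposition~\ref{prop:alpha-local}, so there is no in-paper argument to compare against. Your proof is, as far as I can check, a correct self-contained argument via Voronoi duality: the reformulation $\sigma\in A(X,r)$ if and only if $\min_{q\in F_\sigma}\operatorname{dist}(q,X)\le r$ is exactly the right bridge, the Pythagorean identity on $E_S$ correctly identifies the constrained minimizer of $\operatorname{dist}(\cdot,X)$ with the projection of the circumcenter onto the dual Voronoi face, and the identification of ``Gabriel'' with $c_\sigma\in F_\sigma$ (hence $\mu(\sigma)=\rho_\sigma$) is accurate; the easy direction and the face-closure of $A(X,r)$ are fine. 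One structural remark: your ``descent'' never actually iterates. Because you define $T$ as the \emph{full} nearest-neighbor set of the minimizer $p$, the point $p$ lies in the relative interior of $F_\tau$; if $\tau$ failed to be Gabriel, the projection of $c_\tau$ onto $F_\tau$ would land on the relative boundary of $F_\tau$, contradicting the fact that $p$ is that projection and sits in the relative interior. So the first coface $\tau$ you construct is already short and Gabriel, and the converse is a one-step contradiction rather than an induction --- you could delete the repetition clause entirely. The only things you lean on implicitly are the general-position facts you flag at the end (nearest-neighbor sets are affinely independent, and $F_\tau$ is full-dimensional in $E_T$, so that an external point of $E_T$ projects onto the relative boundary of $F_\tau$); these are standard but should be stated explicitly if this were written out in full.
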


We would like to thank Michael Kerber for showing us the following proof.

\begin{proof}[Proof of Proposition~\ref{prop:alpha-local}]
Let $\sigma$ be a $k$-simplex with with vertices in $X$.
By Lemma~\ref{lem:short-gabriel} it suffices to determine if $\sigma$ is short and Gabriel, because $A(X,r)$ is the collection of all such simplices and their faces.

If the distance between any two vertices of $\sigma$ is more than $2r$, then $\sigma$ is not short.
Hence we assume that all pairwise distances between vertices of $\sigma$ are at most $2r$, and so we know these distances exactly.
We thus know the shape of $\sigma$ up to a rigid motion of $\R^m$.
This allows us to determine the circumcenter and circumradius of $\sigma$.
Simplex $\sigma$ is short if its circumradius is at most $r$.

Next we must determine if $\sigma$ is Gabriel.
Let $x$ be a point in $X$ that is not a vertex of $\sigma$; we must decide if $x$ is in the circumball of $\sigma$.
If the distance between $x$ and any vertex of $\sigma$ is more that $2r$, then $x$ is not in the circumball of $\sigma$ because the circumball has radius at most $r$.
Hence we assume that the distance between $x$ and each vertex of $\sigma$ is at most $2r$, and so we know these distances exactly.
We thus know the shape of $\sigma \cup \{x\}$ up to a rigid motion of $\R^m$.
This allows us to determine if $x$ is in the circumball of $\sigma$, and hence if $\sigma$ is Gabriel.
\end{proof}

\section{Algorithm for mobile coverage} 
\label{sec:algorithm}

The algorithm we present is based on~\cite[Theorem~3]{EvasionPaths}, which states that the time-varying alpha complex and time-varying rotation information (the cyclic order of edges around each vertex) are sufficient to determine whether or not an evasion path exists in a connected mobile sensor network.
While making this theorem algorithmic, one of our key contributions is to present an implementation which is not state-based, i.e.\ not based on cases.

The structures used in~\cite[Theorem~3]{EvasionPaths} are presented in an ad hoc way; what is the intuition behind using the alpha complex with rotation data?
The intuition comes from~\cite[Figure~13]{EvasionPaths}, which shows that the time-varying connectivity data\footnote{i.e., either the time-varying \v{C}ech, alpha, or Vietoris-Rips complexes} in a sensor network alone cannot determine whether or not an evasion path exists.
More information about how the covered region is embedded into spacetime is required.
Since our sensors live in the plane, this extra embedding information can take the form of weak rotation data: each sensor measures the cyclic order of its neighboring sensors.
Through the ideas presented in Section~\ref{ssec:boundary-cycles}, we are able to obtain a combinatorial representation for each of the uncovered region's connected components.
We note that this rotation information distinguishes the two examples in~\cite[Figure~13]{EvasionPaths}.
The question of whether or not the time-varying \v{C}ech complex of a sensor network, in conjunction with rotation data, is enough to determine if an evasion path exists is unknown --- see the open question on Page~13 of~\cite{EvasionPaths}.
However,~\cite[Theorem~3]{EvasionPaths} shows that the time-varying alpha complex is sufficient when equipped with rotation data.
This result relies on the fact that the 1-skeleton of an alpha complex forms a planar graph whereas the 1-skeleton of a \v{C}ech complex may not; see~\cite[Figure~14]{EvasionPaths}.

The main idea is to use the rotation data to extract the boundary cycles of this planar graph.
The boundary cycles effectively partition the domain into polygonal regions.
Some of these regions will correspond to connected components of the complement of the covered region, that is, regions with a portion that is uncovered by the sensors.
The other regions will correspond to the 2-simplices of the alpha complex.
This allows us to identify some boundary cycles with connected components of the complement of the covered region, and the others with 2-simplices.
We can then maintain labels on all boundary cycles and record whether the associated region may contain an intruder or not.
Indeed, these labels can be used to recover the Reeb graph, and thus determine whether or not an evasion path exists.

It should be noted that in~\cite[Theorem~3]{EvasionPaths} the authors assume the alpha complex remains connected for all times $t$.
We will maintain this assumption for now; see Section~\ref{sec:PowerDown} for more on relaxing this assumption.

\subsection{Discrete-time problem}
\label{sec:time-discrete}

So far, the evasion path problem, Reeb graph, and time-varying alpha complex have all been continuously varying in time.
We now discretize the problem in time so that we can track evasion paths in a time-stepping fashion.
In \cite{EvasionPaths}, the authors assume that the times at which changes in the alpha complex occur are known \emph{a priori}, and the time steps used in their algorithm are the time intervals between these changes.
If we wish to solve this problem without knowing in advance the times at which changes occur, we need to think about time stepping in a different way.
Instead of finding the exact times when the alpha complex changes, as studied in~\cite{kerber20133d}, we proceed in a more computationally efficient way.

Instead of stepping from change to change, we will split the time domain $I$ into discrete time intervals $[t_{n}, t_{n+1}]$, where $\Delta t = t_{n+1}-t_{n}$ is constant.
In doing this, we can think of the changes in the alpha complex as happening in the interior of the intervals.
If we follow the assumption in \cite{EvasionPaths} that only one of a fixed set of possible changes happens in the alpha complex at any time, then we can choose $\Delta t$ small enough so that at most one change occurs in each interval.
We will continue temporarily on the assumption that $\Delta t$ is sufficiently small such that any subinterval of $[t_{n}, t_{n+1}]$ will contain at most one combinatorial change in the alpha complex. 
\SIADSEDIT{In practice, it will generally not be possible to know how small to pick $\Delta t$; we will address this in Section~\ref{sec:adaptive-step}.}

We can now solve a discrete evasion path problem on $[t_{n}, t_{n+1}]$ using only information from time $t_{n}$ and $t_{n+1}$.
Formally, on each time interval we solve the following time-discrete problem: 
\begin{problem}
\label{p:discrete-problem}
    Let $\mathcal{H}_{n} = \{p(t_{n})\in D~|~p \text{ is an evasion path on } D \times [0, t_{n}] \}$ be the region which may contain an intruder at time $t_{n}$.
    Determine if there exists an evasion path $p \colon[t_{n}, t_{n+1}] \rightarrow D$ such that $p(t_{n}) \in \mathcal{H}_{n}$.
\end{problem}

The idea in using an alpha complex on $X$ is to partition $D$ into regions which can be uniquely identified via boundary cycles and will be labelled as feasibly having an intruder or not.
If a region is marked as having a possible intruder at times $t_{n}$ and $t_{n+1}$, then under some modest assumptions we can say there is an evasion path on $[t_{n}, t_{n+1}]$; see Section~\ref{sec:alpha-complex}.
In this manner, the outline of the algorithm is straightforward and is shown in Algorithm~\ref{alg:time-step}.
Once the labelling has been determined at time $t_{n+1}$, we can solve Problem~\ref{p:discrete-problem} by simply looking for any regions labelled as feasibly having an intruder.
An evasion path will exist on $[t_{n}, t_{n+1}]$ if any boundary cycle is marked as possibly having an intruder at time $t_{n+1}$.
In solving the Problem~\ref{p:discrete-problem}, we additionally compute all information necessary to proceed forward in time.

\begin{figure}[ht]
    \begin{algorithm}[H]\label{alg:time-step}
    \SetAlgoLined
    \SetKwInOut{Input}{Input}
    \SetKwInOut{Output}{Output}
    \Input{Alpha complex $A_{n}$, boundary cycles $B_{n}$, sensing radius $R$, boundary cycle labelling $L_{n}\colon B_{n} \rightarrow \{0, 1\}$ at $t_{n}$}
    \Output{A mapping $L_{n+1}\colon B_{n+1} \rightarrow \{0, 1\}$, indicating whether each boundary cycle at $t_{n+1}$ may contain an intruder}
    \Begin{
        $X_{n+1} \leftarrow UpdateSensors()$\;
        $A_{n+1} \leftarrow ComputeAlphaComplex(X_{n+1}, R)$\; 
        $rot\_data \leftarrow ComputeRotationalData(X_{n+1})$\;
        $B_{n+1} \leftarrow ComputeBoundaryCycles(A_{n+1}, rot\_data)$\;
        $L_{n+1} \leftarrow UpdateBoundaryCycleLabelling(B_{n}, B_{n+1}, A_{n}, A_{n+1}, L_{n})$\;
        \Return $L_{n+1}$
    }
    \caption{Single Time Step}
    \end{algorithm}
\end{figure}

Solving this problem on successive time intervals allows us to determine the mobile coverage, and existence of evasion paths, independently from the time domain used.
In particular, the end time and times at which changes occur in the alpha complex need not be known \emph{a priori}.
Indeed, the following lemma can be seen via a simple inductive argument:
\begin{lemma}\label{lemma:multistep}
    Solving Problem~\ref{p:discrete-problem} iteratively produces a solution to the continuous evasion path Problem~\ref{p:evasion-path-problem} and the time-discrete analog of Problem~\ref{p:max-T}.
\end{lemma}

\subsection{Combinatorial changes in the alpha complex}
\label{sec:alpha-complex}
Since the alpha complex is homotopy equivalent to the union of sensor balls, the alpha complex can determine the statically covered region of a sensing network (for further details see~\cite{Coordinate-free,de2007coverage}, which instead use the closely related Vietoris-Rips complex).
This allows us to obtain a combinatorial representation of the sensor network's covered region, allowing us to fully discretize the evasion path problem.
One feature of using the alpha complex on $X$ is that because its edges do not cross, it is easy to construct a \SIADSEDIT{ribbon graph} from it.

Now that we have discretized the problem temporally, we may talk about a discrete sequence of static alpha complexes, each giving a combinatorial representation of the topology of the covered region at time $t_{n}$.
We will denote $A_{n}$ as the alpha complex on $X$ with radius $r$ at time $t_{n}$.
We will further denote the sets of 1- and 2-simplices of $A_{n}$ as $A^{1}_{n}$ and $A_{n}^{2}$, respectively.

The ideas presented in~\cite{EvasionPaths} require that only one of a fixed list of \emph{atomic} combinatorial changes in the alpha complex occurs at each point in time.
The list of allowed atomic changes can be found in Figure~\ref{fig:possible-changes}.
In order to allow more complex combinatorial changes to happen at a time, the ideas in~\cite{edelsbrunner1990simulation} may be helpful.
\SIADSEDIT{In the discrete case, we require that at most one change happens between $t_{n}$ and $t_{n+1}$; we will assume this for the remainder of Section~\ref{sec:algorithm}.
This can be achieved for instance by assuming that $\Delta t$ is sufficiently small so that all atomic changes are sufficiently resolved.
In this context, $A_{n}$ and $A_{n+1}$ will differ by at most one of these changes.
}

\begin{figure}[ht]
    \centering
    \includegraphics[width=6.5in]{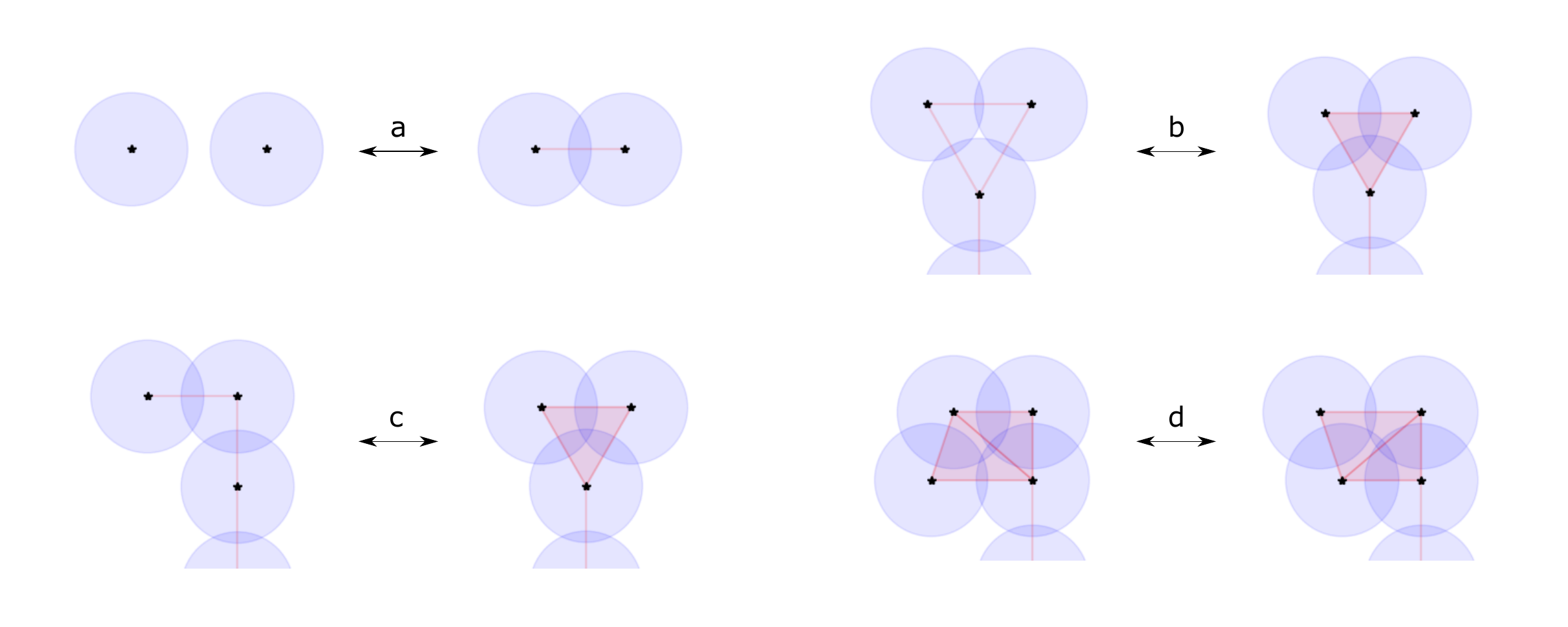}
    \caption{
        \textbf{Atomic combinatorial changes within the alpha complex.}
        \textbf{(a)}
        A single edge appears or disappears.
        \textbf{(b)}
        A single 2-simplex appears or disappears.
        \textbf{(c)}
        A 2-simplex and a free face edge simultaneously appear or disappear.
        \textbf{(d)}
        A Delaunay edge flip occurs.
    }
    \label{fig:possible-changes}
\end{figure}

\subsection{Tracking changes in boundary cycle labels}
\label{sec:boundary-cycles}

Since we are solving this problem in a \SIADSEDIT{streaming} fashion, we do not know which of the allowed combinatorial changes will occur in the alpha complex.
We simply compute $A_{n+1}$ and the associated boundary cycles and must infer by comparison with the previous time step how the boundary cycle labelling will change.
Given that a combinatorial change occurred, we wish to (1) identify which type of combinatorial change occurred, (2) analyze the changes it will induce in the boundary cycles, and (3) determine how the associated boundary cycle labelling will change.

Suppose we know the alpha complexes at time $t_{n}$ and $t_{n+1}$, denoted $A_{n}$ and $A_{n+1}$.
Additionally, assume the set of boundary cycles, $B_{n}$, are known at time $t_{n}$ with the associated labelling $L_{n}\colon B_{n} \rightarrow \{0, 1\}$.
We let $0$ denote that the corresponding cycle cannot contain an intruder, and $1$ denote that the corresponding cycle may contain an intruder.
From $A_{n+1}$ we can compute the rotation data and obtain the associated boundary cycles $B_{n+1}$.
Now with this information as input, we want to determine which combinatorial change occurred, understand the associated changes in boundary cycles, and produce the appropriate boundary cycle labelling $L_{n+1} \colon B_{n+1}\rightarrow \{0,1\}$.

We can determine the transition that occurred by counting the number of boundary cycles, 1-simplices, and 2-simplices that are new and those that are no longer present.
For example, when an edge is added, this can be uniquely detected by verifying that 
\begin{align*}
    \vert A_{n+1}^{1}\setminus A_{n}^{1} \vert = 1, &\quad \vert A^{1}_{n}\setminus A^{1}_{n+1} \vert = 0, \\ 
    \vert A_{n+1}^{2}\setminus A_{n}^{2} \vert = 0, &\quad \vert A^{2}_{n}\setminus A^{2}_{n+1} \vert = 0, \\ 
    \vert B_{n+1}    \setminus B_{n}     \vert = 2, &\quad \vert B_{n}    \setminus B_{n+1}     \vert = 1
\end{align*}
Since the network is connected, this new edge necessarily splits a boundary at time $n$ into two new ones at time $n+1$.
A table for each atomic transition is given in Table~\ref{tab:atomic}.

\begin{table}[ht]
    \centering
    \resizebox{\columnwidth}{!}{
    \begin{tabular}{|m{14em}||c|c|c|c|c|c|}
    \hline
         & \SIADSEDIT{Edges ($+$)} & \SIADSEDIT{Edges ($-$)} & \SIADSEDIT{2-simplices ($+$)} & \SIADSEDIT{2-simplices ($-$)} & \SIADSEDIT{Boundary cycles ($+$)} & \SIADSEDIT{Boundary cycles ($-$)} \\
         \hline
         \hline
         Add edge & 1 & 0 & 0 & 0 & 2 & 1 \\
         \hline
         Remove edge & 0 & 1 & 0 & 0 & 1 & 2\\
         \hline
         Add 2-simplex & 0 & 0 & 1 & 0 & 0 & 0 \\
         \hline
         Remove 2-simplex & 0 & 0 & 0 & 1 & 0 & 0 \\
         \hline
         Add free edge and 2-simplex & 1 & 0 & 1 & 0 & 2 & 1 \\
         \hline
         Remove free edge and 2-simplex & 0 & 1 & 0 & 1 & 1 & 2\\
         \hline
         Delaunay edge flip & 1 & 1 & 2 & 2 & 2 & 2\\
         \hline
    \end{tabular}
    }
    \caption{
    The number of simplices and boundary cycles added \SIADSEDIT{($+$)} or removed \SIADSEDIT{($-$)} under each atomic transition.
    }\label{tab:atomic}
\end{table}

We now analyze how each atomic transition affects the boundary cycles and their labelling; see Figures~\ref{fig:possible-changes} and~\ref{fig:boundary-cycles-changes} for illustrations of each atomic transition.
\begin{enumerate}[leftmargin=*, labelindent=\parindent]
    \item[(a+)] {\bf 1-simplex is added}.
        When a 1-simplex is added, there will always be one old boundary cycle being split into two new boundary cycles.
        So the labelling of the two new boundary cycles will inherit the label of the old (removed) boundary cycle.
        
    \item[(a$-$)] {\bf 1-simplex is removed}.
        When a 1-simplex is removed, we will always have two old boundary cycles joining together into one new boundary cycle.
        So the new boundary cycle will be labelled as potentially having an intruder if either of the old boundary cycles may have had an intruder.
    
    \item[(b+)] {\bf 2-simplex added}.
        When a 2-simplex is added, once the corresponding boundary cycle is identified, it can be set as having no intruder.
    
    \item[(b$-$)] {\bf 2-simplex removed}.
        When a 2-simplex is removed, the corresponding boundary cycle, which was previously labelled with no intruder, will continue to maintain the same label.
        
    \item[(c+)] {\bf Pair consisting of a 2-simplex and a free edge is added}.
        The label updating is similar to adding a 1-simplex, but now we must determine which boundary cycle corresponds to the new 2-simplex and set its label to have no intruder.
        This update can be achieved through the composition of (a+) adding a 1-simplex, and then (b+) adding the 2-simplex.
        
    \item[(c$-$)] {\bf Pair consisting of a 2-simplex and a free edge is removed}.
        Just as in adding a simplex pair, the labelling for this case can be done by (a$-$) removing the 1-simplex and then (b$-$) removing the 2-simplex.
        
    \item[(d)] {\bf Delaunay edge flip}.
        A \SIADSEDIT{Delaunay} edge flip occurs when the two triangles filling in a quadrilateral flip.
        The new boundary cycles will be labelled false since they are filled-in 2-simplices.
\end{enumerate}

We remark that a Delaunay edge flip occurs only in the setting when four sensor locations lie on the boundary of a single circle, with no sensors in the interior of that circle.
Though this \SIADSEDIT{occurrence} is not generic for static sensors, it will happen for mobile sensors.
The Voronoi regions of these four locations have a common intersection point at the center of this circumcircle.
The Delaunay edge flip swaps which diagonal edge is present.
The presence of such a diagonal edge in the alpha complex implies that the sensing radius is at least as large as the circumradius of these four sensor locations.
This implies that the old diagonal edge (prior to the Delaunay edge flip) is adjacent to two 2-simplices in the old alpha complex, and also that the new diagonal edge (after the Delaunay edge flip) is adjacent to two 2-simplices in the new alpha complex.

\SIADSEDIT{Each of these atomic changes can also be interpreted in terms of how it affects the Reeb graph.
The case (a+) corresponds to a boundary cycle (and thus a connected component of $U(t)$) being split into two boundary cycles, and can be associated with a critical point in the Reeb graph. 
Case (a$-$) likewise corresponds to two boundary cycles merging together, again corresponding to a critical point in the Reeb graph. 
Cases (b+) and (b$-$) imply the death and birth of a boundary cycle respectively.
Cases (c+) and (c$-$) do not correspond to a critical point in the Reeb graph since they do not create or destroy any connected components of the complement. However, they do change the representation of the boundary cycles, thus we can associate the old boundary cycle representation with the new boundary cycle representation as belonging to the same connected component of $U(t)$. 
Finally, case (d) does not affect the connected components of $U(t)$, and does not correspond to a critical point in the Reeb graph. 
Note that the assumption that these atomic changes are the only changes implies that the Reeb graph is a binary graph.
}

\SIADSEDIT{
One may explicitly construct a discrete representation of the Reeb graph by associating the times at which each atomic change occurs with a graph node.
For each boundary cycle, a directed edge is then inserted between the graph nodes beginning with the time the boundary cycle was produced and terminating at the time the boundary cycle is next modified. 
}

\begin{figure}[ht]
    \centering
    \includegraphics[width=1\textwidth]{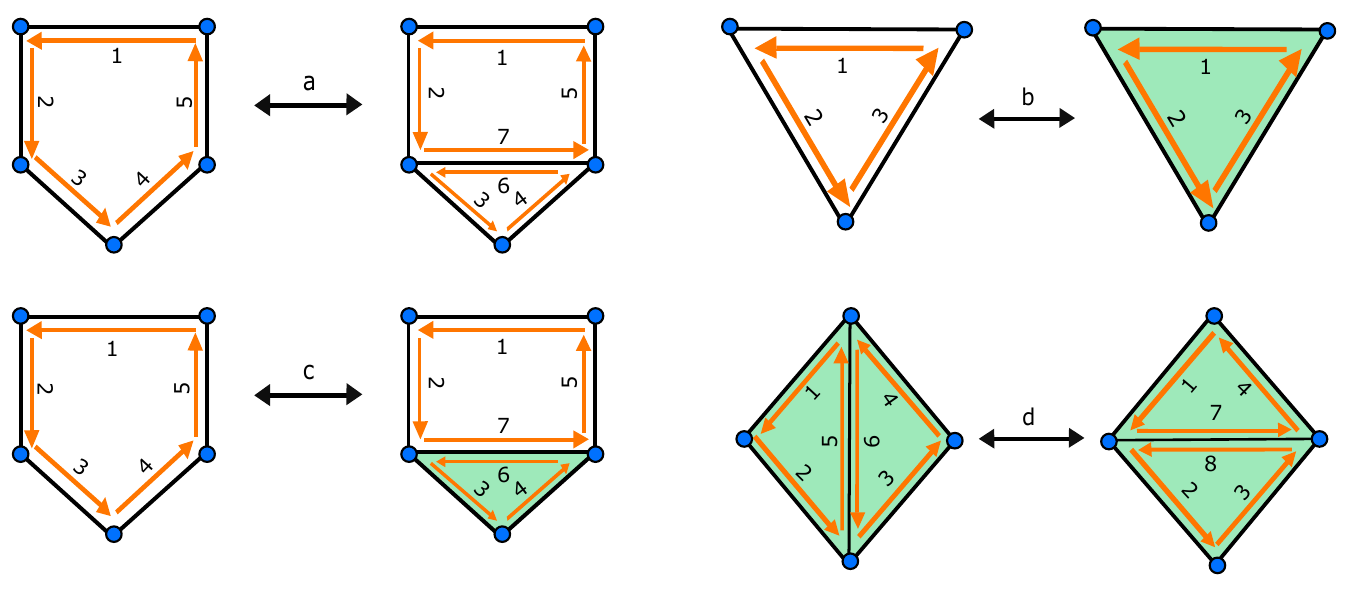}
    \caption{
    \textbf{Atomic boundary cycle updates.}
    These updates correspond to the atomic combinatorial changes within the alpha complex illustrated in Figure~\ref{fig:possible-changes}.
    }
    \label{fig:boundary-cycles-changes}
\end{figure}

\subsection{Non-state-based algorithm}
\label{ssec:non-state}
In~\cite{EvasionPaths, distributed}, the boundary cycle label update is treated as a state-based algorithm, updating the labeling depending on the case (a+), (a$-$), (b+), (b$-$), (c+), (c$-$), or (d) of which atomic transition occurred.
We observe here that, given all the current assumptions, this is not necessary.
This was initially surprising to us, given that the proof of~\cite[Theorem~3]{EvasionPaths} is heavily reliant on its case-based organization.
Indeed, this non-state-based update comes from the realization that in each atomic transition we can first add/remove a 1-simplex (if applicable) and then label all 2-simplices as having no intruder while leaving all other labels unchanged.
This is put into pseudocode in Algorithm~\ref{alg:UpdateLabel-v2}, where $\bigvee$ denotes the ``or'' operation.
We emphasize that Algorithm~\ref{alg:UpdateLabel-v2} is not divided into cases (which for purposes of comparison, would produce the much longer Algorithm~\ref{alg:UpdateLabel}).

\begin{figure}[ht]
\begin{algorithm}[H]\label{alg:UpdateLabel-v2}
\SetAlgoLined
\SetKwInOut{Input}{Input}
\SetKwInOut{Output}{Output}
\Input{Alpha complex $A_{n+1}$, the boundary cycles $B_{n}$ and $B_{n+1}$, boundary cycle labelling $L_{n}\colon B_{n} \rightarrow \{0, 1\}$ }
\Output{The mapping $L_{n+1}\colon B_{n+1} \rightarrow \{0, 1\}$, indicating whether each boundary cycle may contain an intruder}

\Begin{
    \For{$b \in B_{n}\cap B_{n+1}$}{
        $L_{n+1}[b] \leftarrow$ $L_{n}[b]$\;
    }
    \For{$b \in B_{n+1}\setminus B_{n}$}{
        $L_{n+1}[b] \leftarrow \bigvee\limits_{x \in  B_{n}\setminus B_{n+1}} L_{n}[x]$\;
    }
    \For{$b \in B_{n+1}$}{
        \If{Nodes of $b$ form a 2-simplex of $A_{n+1}$}{
            $L_{n+1}[b] \leftarrow \mathbf{False}$\;
        }
    }

    \Return $L_{n+1}$\;
}
\caption{Labelling Update - Serialized}
\end{algorithm}
\end{figure}

\subsection{Where is the fence used?}
\SIADSEDIT{
The astute reader (or an anonymous referee who we would like to acknowledge) may notice that the fence sensors do not play a larger role than any other sensors in the algorithm above.
In some sense, the fence sensors are not needed.
Suppose that the sensors and intruders live in $\R^2$.
With only a finite number of sensors with finite radii, there will always be a single unbounded connected component of the uncovered region.
If this unbounded component contains intruders, then it can never be swept by a finite number of sensors with finite radii.
However, suppose that at the initial time, only \emph{some} of the connected components of the uncovered region contain intruders, and the unbounded component of the uncovered region does not.
One could then run our algorithm, maintaining labels to denote which components could contain intruders at each time, to decide in an if-and-only-if fashion whether an evasion path exists or not.
As remarked above, if the unbounded component ever merges with one that contains intruders, then the unbounded component will forevermore contain intruders --- but that might never happen.
(Alternatively, one could also run a version of our algorithm for sensors on the $2$-sphere.)
}

\section{Realization of the evasion path algorithm}
\label{sec:realization}

There are some technicalities that must be overcome if one is to implement the algorithm described in Section~\ref{sec:algorithm}.
Specifically, we have found the assumptions of being able to choose a small enough uniform time step size, as well as maintaining connectivity of the alpha complex, quite strong assumptions if one wishes to use arbitrary models of motion.
In this section, we relax these assumptions and provide a way to solve the evasion path problem without changing the nature of the original algorithm.

\subsection{Adaptive time stepping} \label{sec:adaptive-step}

As mentioned above in Section~\ref{sec:time-discrete}, we have been assuming that our time step $\Delta t$ is sufficiently small to resolve all combinatorial changes in the alpha complex.
If we wish to select $\Delta t$ as a parameter, we have the problem that there is also the underlying inability to know exactly how small is ``sufficiently small.''
Such knowledge would require that we know at what exact times the alpha complex will undergo atomic changes, which may be possible for simple trajectories~\cite{kerber20133d}, and impractical in more complex situations. 
We found that for this assumption to hold, $\Delta t$ must generally be quite small.
With the alpha complex being expensive to compute, this renders time stepping using a uniform $\Delta t$ impractical.

Our solution to this problem is to use an adaptive time-stepping scheme to resolve interval steps of size $\Delta t$ into subintervals of unequal size, each of which contain at most one atomic change.
We recall that atomic changes are assumed to happen in the interior of our time intervals.
Note that we still require the assumption that at each point in time only one atomic transition occurs.
With this in mind, if $\Delta t$ is not small enough, two or more atomic changes may happen within a single interval (though at different times).
If we detect that $A_{n}$ and $A_{n+1}$ differ by more than an atomic change, we can recursively divide the interval in two and solve Problem~\ref{p:discrete-problem} on the subintervals $[t_{n}, t_{n}+ \frac{\Delta t}{2}]$ and $[t_{n} + \frac{\Delta t}{2}, t_{n}+\Delta t]$.
This is equivalent to performing a binary search for the intervals which differ by only one atomic change.
Pseudocode for this process is shown in Algorithm~\ref{alg:intruder-detection-adaptive} in the appendix.
This algorithm works because Lemma~\ref{lemma:multistep} provides that if there is an evasion path on $[t_{n}, t_{n}+ \frac{\Delta t}{2}]$ and on $[t_{n} + \frac{\Delta t}{2}, t_{n}+\Delta t]$ (overlapping on the same connected component of the uncovered region at time $t_{n} + \frac{\Delta t}{2}$), then there is an evasion path on $[t_{n}, t_{n}+\Delta t]$.

\SIADSEDIT{Note that if $A_{n}$ and $A_{n+1}$ are identical, this does not imply the same for every subinterval of $[t_{n}, t_{n+1}]$. 
For example, removing an edge and then adding the same edge back in within a single time interval could produce identical alpha complexes $A_{n}$ and $A_{n+1}$.
Similarly, if $A_{n}$ and $A_{n+1}$ differ by at most one atomic change, this does not imply the same for every subinterval of $[t_{n}, t_{n+1}]$.
For example, if two edges were added and one of the two then removed within a single time interval, $A_{n}$ and $A_{n+1}$ would differ by a single added edge.
Since our adaptive time-stepping scheme produces a sequence of alpha complexes such that each consecutive pair is identical or differs by a single atomic change, we assume that if a time step contains at most one atomic change, further refinement will not produce any other atomic changes.}

\SIADSEDIT{This adaptive time-stepping approach has been developed to minimize the number of alpha complex computations needed to resolve a given interval. 
Observe that when attempting to resolve $A_{n}$ and $A_{n+1}$, the alpha complex $A_{n+1}$ can be stored for later reuse when resolving $A_{n+1 - 2^{-i}}$ and $A_{n+1}$ and so on, recursively. 
The most common approach to computing the alpha complex is to compute the Delaunay triangulation and then filter out unneeded simplices.
This approach suffers from the high computational cost of the Delaunay triangulation, namely a complexity of $\mathcal{O}(K^{\lceil\frac{d}{2}\rceil +1})$, where $K$ is the number of vertices in the alpha complex~\cite{Elshakhs2024Delaunay}. 
If the rotation information can be queried from the sensors directly or maintained dynamically, then this does not add to the complexity. 
Generation of the boundary cycles has complexity $\mathcal{O}(E)$, where $E$ is the number of edges in the alpha complex, since we are simply generating the orbits of $\varphi$. 
Thus each recursion level has complexity $\mathcal{O}(K^2 + E)$, so the overall algorithm has complexity $\mathcal{O} (M (K^{2} + E))$, where $M$ is the number of steps produced by the recursion process.}

\subsection{Extension to disconnected sensor networks}
\label{sec:PowerDown}

In Section~\ref{sec:boundary-cycles}, we operate entirely under the assumption that the sensor network, or equivalently the alpha complex, remains connected.
This is a rather strong assumption if we wish to investigate arbitrary models of motion.
We need to allow the sensor network to become disconnected.

The algorithm from Section~\ref{sec:algorithm}, however, does not work in the context of disconnected sensor networks.
As can be seen from Figure~16 of~\cite{EvasionPaths}, a number of problems arise when we attempt to allow for a disconnected alpha complex.
Indeed, this figure shows two sensor networks --- both of which become disconnected --- which have the same alpha complex and rotation information at each time, yet one contains an evasion path while the other does not.
Allowing for a disconnected alpha complex creates the following problems, among others: (1) The embedding of the alpha complex in $\mathbb{R}^{2}$ may be ambiguous; (2) the identification of boundary cycles with connected components of the uncovered region is not injective; (3) the boundary cycles must be aware of their embedding relative to each other.
Even worse, if we want to fix issue (3), local information is not sufficient; nonlocal information is required.

We now present a modification of the evasion path problem that allows us to determine existence of evasion paths without this rather restrictive assumption of the sensors maintaining connectivity.
The main idea is to ignore sensors that become disconnected from the fence component until they are reconnected, in a similar model to that considered in \cite{Coordinate-free}.
This can be physically motivated by considering the scenario where disconnected sensors are out of range of the main computing network, i.e., we can imagine a setup where sensors that are disconnected from the fence are disconnected from a power source.
For this reason we will refer to this model of allowing disconnected sensors as the ``power-down model.''
Analogously, any algorithm which solves Problem~\ref{p:evasion-path-problem} and allows disconnected sensors to remain turned on will be referred to as a ``power-on model.''
We do not consider a power-on model here, and instead save it for future work, since such a model requires nonlocal information and would change the nature of the problem being solved.

Topologically, the power-down model changes the definition of the covered region as follows.
Recall that $F\subseteq X$ is the subset of fence sensors.
At each time $t$, we partition the covered region $C(t)=\cup_{x\in X}B_r(x(t))$ into two regions: the part that is in the same connected component as the fence sensors, and the part that is not in this connected component.
This partitions the set $X$ of sensors into two groups: $X=X'\coprod X''$, where $X'\supseteq F$ is the set of sensors in the same connected component of $C(t)$ as the fence sensors, and where $X''=X\setminus X'$ is the set of sensors which are not in this component.
In the power-down model, all of the sensors in $X''$ are turned off, and do not contribute to the covered region.
Indeed, the covered region in the power-down model is $\tilde{C}(t)=\cup_{x\in X'(t)} B_r(x(t))$.
We are still interested in whether or not there exists an evasion path, i.e.\ a continuous path $p\colon I\to D$ with $p(t)\notin\tilde{C}(t)$ for all times $t$.
With this modified notion of an evasion path, we can proceed with solving Problem~\ref{p:evasion-path-problem}.

We remark that our algorithm for detecting mobile coverage with possibly disconnected sensor networks in the power-down model is still local, relying only on alpha complexes (computed locally as in Section~\ref{section:alpha-local}) and weak rotation data.
Moreover, our local algorithm for the power-down model still gives one-sided bounds for a would be power-on model: if no evasion path exists with the power-down sensors, then no evasion path exists for power-on sensors undergoing the same motions.
Similarly, if an evasion path $p$ exists for the power-on model, then $p$ is also an evasion path for the power-down model.
These are consequences of the inclusion $\tilde{C}(t) \subseteq C(t)$ of the covered regions in the two different models, for all $t$.

An attractive feature of the power-down modification is that it adds no complexity or data structures to existing algorithms.
In the context of the previous sections, the only modification is that we allow for two additional atomic combinatorial changes corresponding to components becoming connected or disconnected from the fence.
We additionally can ignore all transitions that occur within a disconnected component.
We follow the same time discretization process as in Section~\ref{sec:algorithm}; the only difference is that we must handle the additional cases where components of the alpha complex become disconnected or reconnected.
This is a only a minor modification thanks to the easily seen observation:

\begin{lemma}
Under the assumption that at most one combinatorial change listed in Section~\ref{sec:boundary-cycles} happens within each time, then a component of the alpha complex can only become disconnected (or reconnected) if a single edge is removed (or added) to the alpha complex.
\end{lemma}

\subsection{Boundary cycle label updates}
When a component of the alpha complex becomes disconnected from the fence, we will have a single boundary cycle initially surrounding a hole which will then be split into two boundary cycles; see Figure~\ref{fig:disconnection} for illustration.
One of these boundary cycles will correspond to the outside of a new disconnected component; it will not be given any label.
The other boundary cycle will be a hole enclosing the disconnected component, and if it is connected to the fence then we will maintain a label for this boundary cycle, and its initial label will be the same as the prior boundary cycle that split.
In later time steps, any boundary cycle on a disconnected component will not be labeled.
This represents the idea that the sensors that become disconnected are not a part of the power-down alpha complex.
This type of atomic change can be uniquely identified just as in Section~\ref{sec:boundary-cycles}; see Table~\ref{tab:disconnected}.

When a disconnected component becomes reattached to the fence through the addition of a 1-simplex, we will have the two previously described boundary cycles merging into one.
All of the newly connected boundary cycles will be labeled to match the outer enclosing boundary cycle; however, any  boundary cycle filled by a 2-simplex will be labeled as having no intruder.
Again, this atomic change can be detected by simply counting the number of boundary cycles and simplices in the alpha complex before and after the transition; see Table~\ref{tab:disconnected}.

We can detect if a component has become disconnected or reconnected by simply extending Table~\ref{tab:atomic} with the following two cases.
\begin{table}[ht]
    \centering
    \resizebox{\columnwidth}{!}{
    \begin{tabular}{|m{14em}||c|c|c|c|c|c|}
        \hline
        & \SIADSEDIT{Edges ($+$)} & \SIADSEDIT{Edges ($-$)} & \SIADSEDIT{2-simplices ($+$)} & \SIADSEDIT{2-simplices ($-$)} & \SIADSEDIT{Boundary cycles ($+$)} & \SIADSEDIT{Boundary cycles ($-$)} \\
        \hline
        \hline
        Disconnect & 0 & 1 & 0 & 0 & 2 & 1 \\
        \hline
        Reconnect & 1 & 0 & 0 & 0 & 1 & 2\\
        \hline
        \hline
    \end{tabular}}
    \caption{
        The number of simplices and boundary cycles added \SIADSEDIT{($+$)} or removed \SIADSEDIT{($-$)} under the additional possible atomic transitions (extending Table~\ref{tab:atomic}) in the power-down model.
    }
    \label{tab:disconnected}
\end{table}

\begin{figure}[ht]
\centering
\includegraphics[width=16cm]{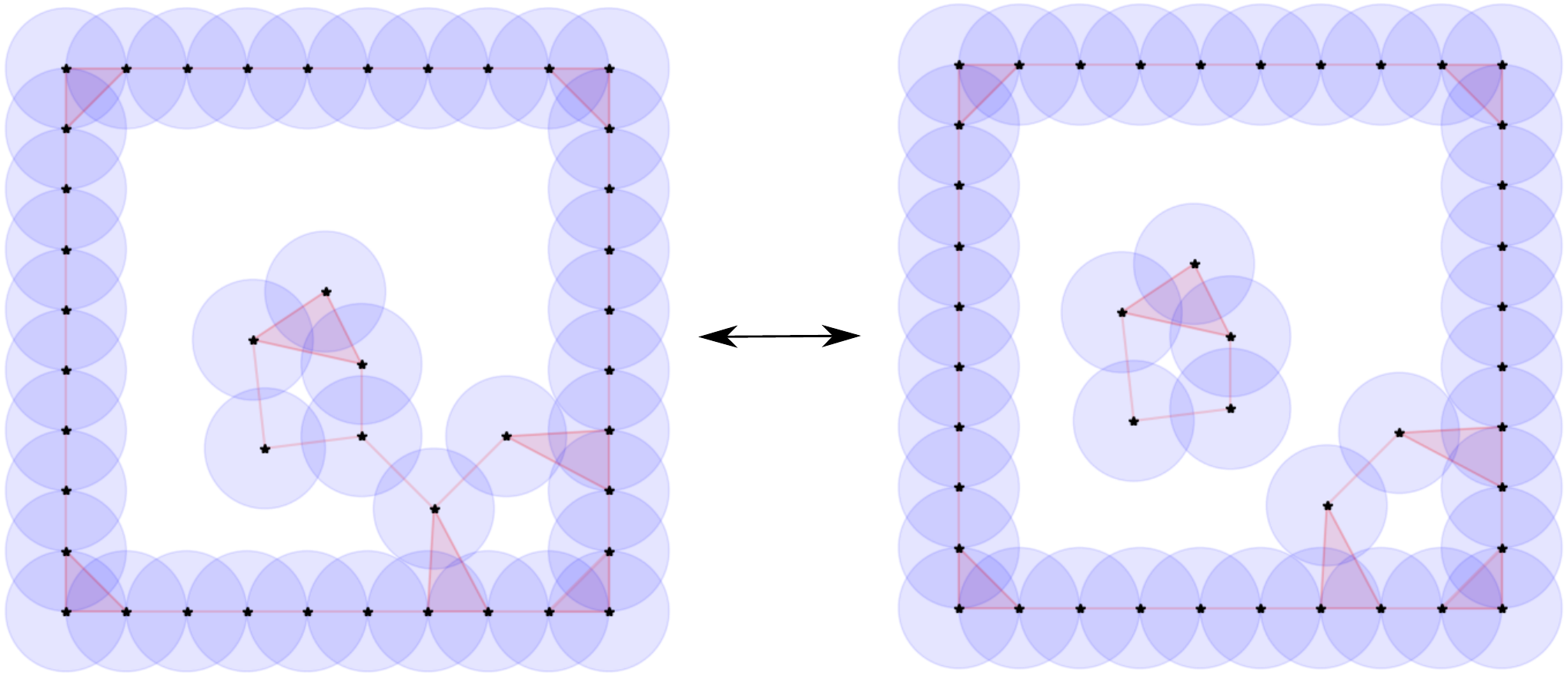}
\caption{
\textbf{A component of the alpha complex disconnects from the fence component.}
From left to right, a single edge disappears, producing an additional connected component.
From right to left, a single edge appears, merging two connected components into one.
}
\label{fig:disconnection}
\end{figure}

\section{Detection time statistics for diffusive sensors}
\label{sec:stochastic}

In this section, we compute and examine the detection time $T_{\max}$ distributions assuming the mobile sensors move diffusively.
Indeed, diffusion is relevant when minimal sensor networks experience random environmental forces, such as winds.

In all of the models of sensor motion we consider (diffusion, billiards, and collective motion) in this section and the next, the initial locations of the mobile sensors at time zero are chosen uniformly at random inside the domain.
We note that $T_{\max}$ is a random variable that can attain the value infinity.
(This happens if the sensor network never provides mobile coverage.)
Experimentally, we appear to be in regimes where $T_{\max}=\infty$ happens with probability zero, and the expected value $E[T_{\max}]$ is finite.
If needed (say for computational purposes), one could replace $T_{\max}$ with a ``capped value'' that is the minimum of $T_{\max}$ and some large positive constant.
But this was not needed for our experiments---all of our trials yielded mobile coverage in finite time.

\subsection{Domain geometry}
\label{subsec:squares}

Our domain is a scaled square $D = (1 + \delta) S$, where $S$ is the unit square centered at the origin and $\delta > 0$ is small and fixed.
The sensing domains of the immobile fence sensors cover the boundary $\partial D$.
We require that the mobile sensors remain within the unit square $S$.
In particular, mobile sensors reflect elastically off of the boundary of $S$.
We initialize the locations of the mobile sensors by placing them uniformly at random within $S$.

\subsection{Performance of Brownian mobile sensor networks}

We assume that each mobile sensor moves independently and satisfies the stochastic differential equation
\begin{equation*}
\mathrm{d} \bm{x}_{t} = f (\bm{x}_{t}) \, \mathrm{d} t + \sigma \, \mathrm{d} \bm{W}_{t},
\end{equation*}
where $f$ denotes the drift and $\bm{W}_{t}$ denotes standard two-dimensional Brownian motion.
We examine $T_{\max}$ statistics assuming no drift ($f$ vanishes), upon varying both the sensing radius $r$ and the number of mobile sensors $N$.
We generated $T_{\max}$ distributions by performing 500 mobile sensor network simulations for each $(N,r)$ pair and using our algorithms to compute $T_{\max}$ for each simulation.
We used diffusion coefficient $\sigma = 0.5$ for every simulation.
See Figure~\ref{fig:initial_density} for visualization of a sample sensor network initialization for various $(N,r)$ pairs.

\begin{figure}[ht]
\centering
\includegraphics[width=15cm]{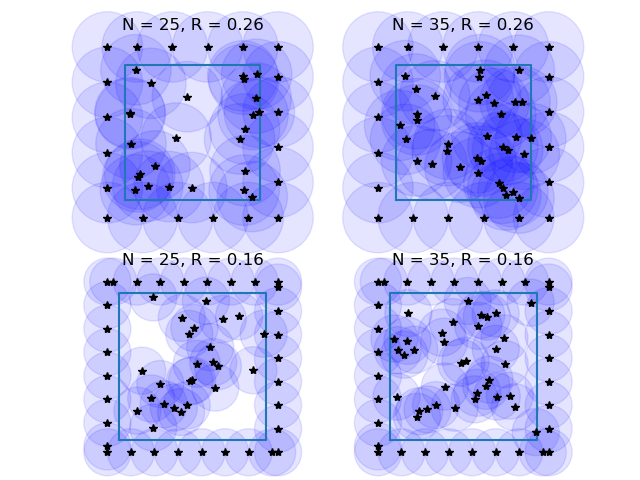}
\caption{
Sample sensor network initialization for various $(N,r)$ pairs.
}
\label{fig:initial_density}
\end{figure}

\begin{figure}[ht]
\centering
\includegraphics[width=15cm]{}
\caption{
\textbf{Detection time statistics for Brownian mobile sensor networks.}
The heatmaps illustrate \textbf{(a)} mean detection time $E[T_{\max}]$, \textbf{(b)} detection time variance $\Var [T_{\max}]$, and \textbf{(c)} the probability that the union of the sensing balls covers the entire domain $D$ at time $t=0$.
We vary the sensing radius $r$ and the number $N$ of mobile sensors.
\textbf{(d)}
Detection time distributions (smoothed histograms) for various $(N,r)$ pairs.
}
\label{fig:Brownian-results}
\end{figure}

Figure~\ref{fig:Brownian-results}d illustrates several of these distributions for $T_{\max}$.
Notice that they are unimodal.
Further, the $T_{\max}$ distribution concentrates near $t=0$ as $N$ increases (for fixed $r$) and as $r$ increases (for fixed $N$).
It would be interesting to launch an in-depth mathematical and empirical study of $T_{\max}$ distributions for the motion models we here consider.
Mathematically, one could attempt to prove limit theorems for $T_{\max}$, for instance when $N \to \infty$ and $r \to 0$ with $N r^{2}$ held constant.
Empirically, one could fit known distributions to the data obtained from simulations in an effort to test conjectures about the distribution of $T_{\max}$.
We make one such conjecture here.
For Brownian mobile sensor networks, $T_{\max}$ (when discretized) is distributed approximately geometrically when $N$ and $r$ are large.
The distributions for $(N,r) = (35, 0.16)$ and $(N,r) = (25, 0.20)$ shown in Figure~\ref{fig:Brownian-results}d support this conjecture.

The probability that the sensing balls collectively cover the entire domain $D$ at time $t=0$ (that is, the probability of static coverage) varies widely over $(N,r)$-space (Figure~\ref{fig:Brownian-results}c).
Importantly, even when the probability of static coverage is low (Figure~\ref{fig:Brownian-results}c, lower-left corner), the mobile sensor network nevertheless detects all possible evaders relatively quickly (Figure~\ref{fig:Brownian-results}a, lower-left corner).

As expected, Figure~\ref{fig:Brownian-results}a indicates that $E[T_{\max}]$ decreases as $r$ increases (for fixed $N$) and as $N$ increases (for fixed $r$).
For Brownian mobile sensor networks, $\Var [T_{\max}]$ behaves the same way (Figure~\ref{fig:Brownian-results}b).

\section{Collective motion improves mobile sensor network performance}
\label{sec:deterministic}

Suppose the mobile sensors are capable of self-propulsion.
How should the mobile sensors move in order to optimize performance of the sensor network?
Suppose that each mobile sensor can compute the relative positions of nearby sensors.
Can improved mobile sensor network performance emerge when the mobile sensors use this local position data to locally coordinate?

We answer this question affirmatively by allowing the mobile sensors to use attractive and repulsive forces for local coordination.
In particular, we use the D'Orsogna model~\cite{DOrsogna2006self}, a seminal effort to explain the emergent phenomena that arise when agents move collectively.
We compare this \textquoteleft locally strategic\textquoteright\ model to a \textquoteleft locally oblivious\textquoteright\ alternative wherein the mobile sensors move independently as simple billiard particles.
Both the billiard and D'Orsogna models proceed deterministically after the initial random sensor locations and headings have been chosen, in contrast with the stochastic Brownian motion in the previous section.

\subsection{Billiard motion}

Our domain geometry is as in Section~\ref{subsec:squares}.
We initialize the sensor network by placing each of the $N$ mobile sensors uniformly at random within the unit square $S$ and assigning each a random initial direction of motion $\theta_{k} \in \mathbb{R} / 2 \pi \mathbb{Z}$.
Each mobile sensor moves independently of the others at unit speed and changes direction of motion only when colliding elastically with the virtual boundary $\partial S$.

\subsection{Collective mobile sensor motion}

The D'Orsogna model uses attractive and repulsive potentials to model the dynamics of animal collectives such as fish schools and bird flocks.
Here, we show that if mobile sensors use such potentials to locally coordinate, the emergent collective motion can cause these D'Orsogna sensor networks to outperform billiard sensor networks.

The $N$ mobile sensors $\bm{x}_{k}$ obey the nonlinear system
\begin{subequations}
\label{e:DOrsogna-system}
\begin{align}
\frac{\mathrm{d} \bm{x}_{k}}{\mathrm{d} t} &= \bm{v}_{k}
\\
\mathfrak{m} \frac{\mathrm{d} \bm{v}_{k}}{\mathrm{d} t} &=
(\alpha - \beta \| \bm{v}_{k} \|^{2}) \bm{v}_{k} - \sum_{\substack{m = 1\\ m \neq k}}^{N} g (\| \bm{x}_{m} - \bm{x}_{k} \|) \nabla_{k} U_{m} (\bm{x}_{k}).
\label{e:Newton}
\end{align}
\end{subequations}
The generalized Morse potential
\begin{equation}
\label{e:potential}
U_{m} (\bm{x}_{k}) = C_{r} e^{-\| \bm{x}_{m} - \bm{x}_{k} \| / \ell_{r}} - C_{a} e^{-\| \bm{x}_{m} - \bm{x}_{k} \| / \ell_{a}}
\end{equation}
describes the interaction between sensors $m$ and $k$.
The parameters $\ell_{a}$ and $\ell_{r}$ in Eq.~\eqref{e:potential} quantify the length scales for attraction and repulsion, respectively.
The parameters $C_{a}$ and $C_{r}$ quantify attraction and repulsion magnitudes.
Constant $\mathfrak{m}$ is a sensor's mass.

The function $g$ localizes the interaction force to ensure that sensors $m$ and $k$ only interact if they are near one another.
Our use of the localizer $g$ is consistent with our minimal sensing approach: The mobile D'Orsogna sensors should only have the ability to gather and react to local conditions.
Concretely, $g$ is given by
\begin{equation*}
g(x) =
\begin{cases}
1, &\text{if } x < 2r,
\\
0, &\text{if } x \geqslant 2r.
\end{cases}
\end{equation*}
We note that two sensors interact, affecting each other's motion, if and only if their sensing balls of radius $r$ overlap.

In isolation, mobile D'Orsogna sensor $k$ only feels the force $(\alpha - \beta \| \bm{v}_{k} \|^{2}) \bm{v}_{k}$ and will therefore approach the asymptotic speed $\sqrt{\alpha / \beta }$.
In equilibrium, speeds of the mobile D'Orsogna sensors should statistically fluctuate about this asymptotic value.

\subsection{Parameter selection and initial data for model comparison}

The mobile billiard sensors move at unit speed.
We have set $\alpha = \beta = 1$ in Eq.~\eqref{e:Newton} so that the equilibrium speeds of the mobile D'Orsogna sensors fluctuate about one.
This choice matches the characteristic speeds of the two models, allowing for a meaningful sensor network performance comparison.
We initialize the billiard and D'Orsogna sensor networks in the same way: 
We place each of the $N$ mobile sensors uniformly at random within the unit square $S$ and we assign each a random initial direction of motion $\theta_{k} \in \mathbb{R} / 2 \pi \mathbb{Z}$.

We have set the attraction and repulsion length scales in the D'Orsogna potential~\eqref{e:potential} to $\ell_{a} = 1$ and $\ell_{r} = 0.1$, respectively.
The scale separation here models collective animal behavior, namely short-range repulsion, and attraction over longer distances.
We have set the attraction and repulsion magnitude parameters to $C_{a} = 0.45$ and $C_{r} = 0.5$, respectively.
The mobile D'Orsogna sensors have mass $\mathfrak{m} = 1$.

\subsection{D'Orsogna sensor networks outperform billiard sensor networks}

We computed detection time $T_{\max}$ distributions for both the D'Orsogna sensor network and the billiard sensor network simulations.
We did so by performing 500 simulations for each choice of sensing radius $r$ and number of mobile sensors $N$.
Figure~\ref{fig:model-comparison} compares $E[T_{\max}]$ and $\Var [T_{\max}]$ behavior for the two sensor network models.

\begin{figure}[ht]
\centering
\includegraphics[width=15cm]{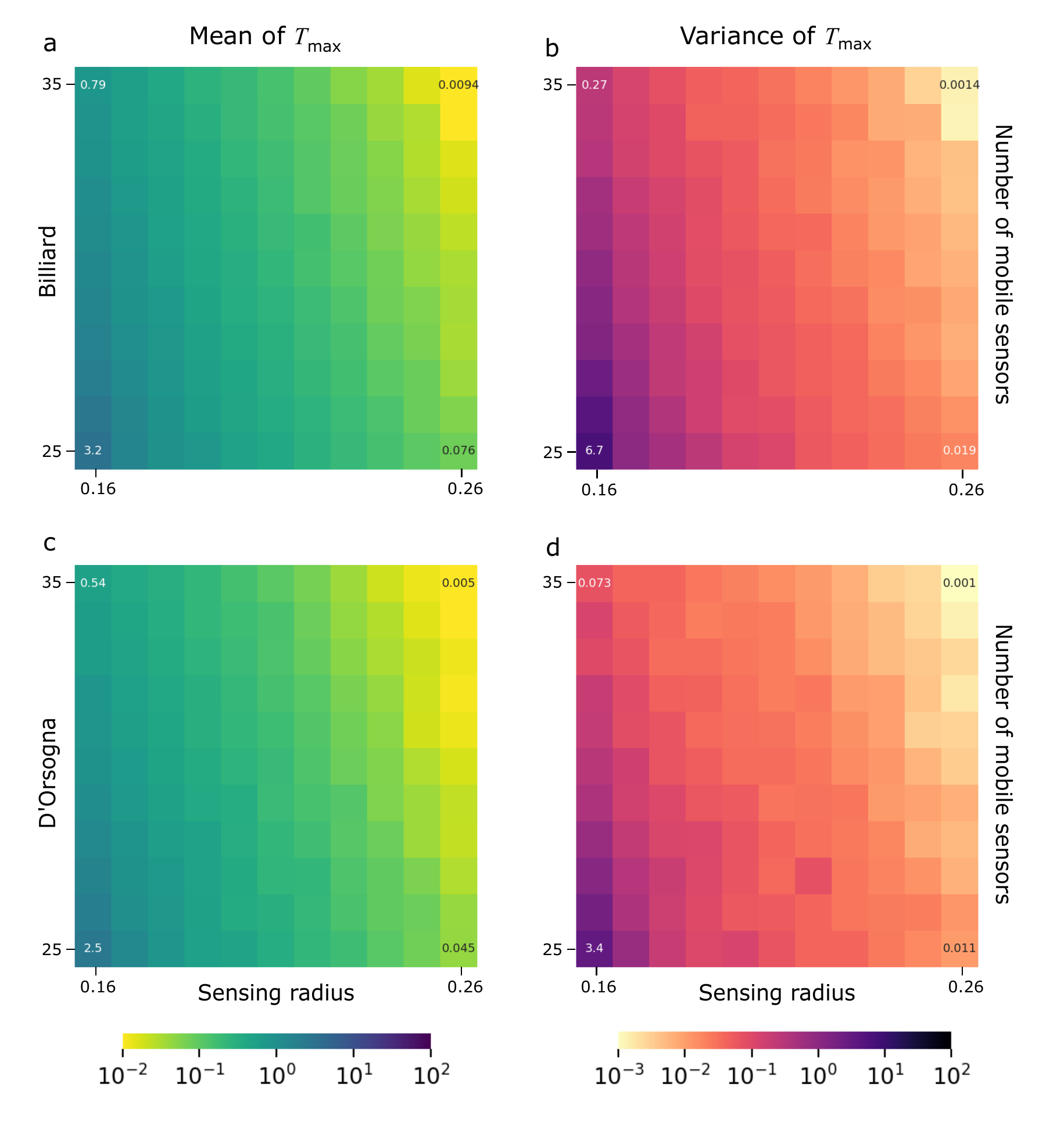}
\caption{
\textbf{Performance comparison of D'Orsogna and billiard mobile sensor networks.}
Heatmaps illustrate detection time statistics as functions of the sensing radius $r$ and the number of mobile sensors $N$.
Left column: Mean detection time.
Right column: Detection time variance.
Top row: Billiard sensor network.
Bottom row: D'Orsogna sensor network.
}
\label{fig:model-comparison}
\end{figure}

Importantly, the D'Orsogna network detects evaders more quickly on average than the billiard network (Figure~\ref{fig:model-comparison}ac).
Quantitatively, averaging
\begin{equation*}
\frac{E[T_{\max}] (\text{billiard}) - E[T_{\max}] (\text{D'Orsogna})}{E[T_{\max}] (\text{billiard})}
\end{equation*}
over all $(N,r)$ pairs, we find that the D'Orsogna network outperforms the billiard network by $23.5\%$.
We speculate that both the repulsive and attractive potentials help the D'Orsogna network outperform the billiard network.
The repulsive potential may cause the typical fraction of the domain covered by the mobile D'Orsogna sensors at a fixed time to eclipse that of the mobile billiard sensors.
The attractive potential may help prevent individual mobile D'Orsogna sensors (or small groups) from disconnecting from the fence component.

As is the case for the Brownian network, $\Var [T_{\max}]$ varies monotonically with $r$ and with $N$ for the billiard network (Figure~\ref{fig:model-comparison}b) and the D'Orsogna network (Figure~\ref{fig:model-comparison}d).

\section{Conclusion}
\label{sec:conclusion}

Minimal sensor networks attempt to solve global sensing problems using local data.
Assuming mobile planar sensors collect local connectivity data, local distance data, and weak rotation information, can we determine whether or not an evasion path exists?
Adams and Carlsson answer this question affirmatively~\cite[Theorem~3]{EvasionPaths}.
Here, we have converted this theorem into explicit algorithms that allow us to compute detection time distributions for both deterministic and stochastic mobile sensor networks.
We have computed and analyzed these distributions for sensor networks with Brownian, billiard, and D'Orsogna motion models.
Comparing the two deterministic mobile networks, the D'Orsogna system outperforms its billiard counterpart.
This suggests that emergent dynamics resulting from collective local motion can positively impact mobile sensor network performance.

We end with open questions that we hope will help inspire future research.
Opportunities exist for both theoretical and computational investigations.

\begin{enumerate}[leftmargin=*, label=\textbf{(\arabic*)}, itemsep=0.5ex]
\item
The expected time to achieve mobile coverage decreases as we increase the number of sensors or as we increase the sensing radius.
Do phase transition phenomena ever arise?
\item 
In the limit as the number of sensors goes to infinity and the sensing radius of each sensor goes to zero, can one describe the asymptotics of the expected time to achieve mobile coverage?
Does the distribution of this detection time, $T_{\max}$, converge in this limit?
If it does, to what limiting distribution?
How do these asymptotics depend on the model of sensor motion?
\item
For deterministic sensor motion models, how do the statistics of $T_{\max}$ depend on the underlying invariant measure and rate of decay of correlations?
\item 
The detection time distributions in Figure~\ref{fig:Brownian-results}d are unimodal.
Is this always the case for Brownian motion?
What happens for different models of motion?
\item 
What \emph{collective} motion models, wherein each mobile sensor can adjust its motion based on the relative positions of its local (but not global) neighbors, are the most effective at decreasing the expected time until mobile coverage is achieved?
\item What can be said about collections of sensors with randomly varying sensing radii?
\item What do the statistics for mobile coverage look like in the power-on model, wherein mobile sensors are allowed to become disconnected but still all remain on?
\item 
How do the statistics for mobile coverage change as the domain varies?
For instance, what happens on a torus, or on other domains that are not simply connected?
\item
How would the presence of hyperbolicity in the dynamics impact mobile sensor network performance?
For instance, what happens if the mobile sensors move as billiard particles and reflect off of convex scatters (the Lorenz gas or Sinai billiard setup)?
\item 
Suppose we have a fixed domain, say a unit cube or a unit ball in $\R^d$.
Let $V>0$ be a fixed volume that will be distributed amongst sensors.
For any integer $N \geqslant 1$, suppose we have $N$ ball-shaped sensors, each of volume $V/N$, which move in the domain according to (say) Brownian motion.
How does the expected time until mobile coverage depend on $N$?
Which value of $N$ (as a function of $V$) minimizes the expected mobile coverage time?
\end{enumerate}

\section{Acknowledgments}

We would like to acknowledge Michael Kerber for showing us the result in \SIADSEDIT{Section~\ref{section:alpha-local}}.
\SIADSEDIT{We would like to thank the anonymous reviewers for their insightful and helpful feedback.}

\section{Funding}

This work has been partially supported by National Science Foundation grant DMS 1816315 (William Ott).

\section{Code availability}

See~\cite{OurGitHubCode} for code and documentation.

\FloatBarrier

\bibliographystyle{siamplain}
\bibliography{ProbabilityEvasion}

\begin{thebibliography}{10}

\bibitem{OurGitHubCode}
\url{https://github.com/elykwilliams/EvasionPaths}.

\bibitem{AdamsThesis}
{\sc H.~Adams}, {\em Evasion paths in mobile sensor networks}, PhD thesis,
  Stanford University, 2013.

\bibitem{EvasionPaths}
{\sc H.~Adams and G.~Carlsson}, {\em Evasion paths in mobile sensor networks},
  International Journal of Robotics Research, 34 (2015), pp.~90--104,
  \url{https://doi.org/10.1177/0278364914548051}.

\bibitem{biasotti2008reeb}
{\sc S.~Biasotti, D.~Giorgi, M.~Spagnuolo, and B.~Falcidieno}, {\em Reeb graphs
  for shape analysis and applications}, Theoretical computer science, 392
  (2008), pp.~5--22.

\bibitem{carlsson2020space}
{\sc G.~Carlsson and B.~Filippenko}, {\em The space of sections of a smooth
  function}, 2020, \url{https://arxiv.org/abs/2006.12023}.

\bibitem{Chin2010detection}
{\sc J.-C. Chin, Y.~Dong, W.-K. Hon, C.-T. Ma, and D.~Yau}, {\em Detection of
  intelligent mobile target in a mobile sensor network}, IEEE/ACM Transactions
  on Networking, 18 (2010), pp.~41--52,
  \url{https://doi.org/10.1109/TNET.2009.2024309}.

\bibitem{chintakunta2013distributed}
{\sc H.~Chintakunta and H.~Krim}, {\em Distributed boundary tracking using
  alpha and {D}elaunay--\v{C}ech shapes}, arXiv preprint arXiv:1302.3982,
  (2013).

\bibitem{Chung2011search}
{\sc T.~Chung, G.~Hollinger, and V.~Isler}, {\em Search and pursuit-evasion in
  mobile robotics {A} survey}, Autonomous Robots, 31 (2011), pp.~299--316,
  \url{https://doi.org/10.1007/s10514-011-9241-4}.

\bibitem{Coordinate-free}
{\sc V.~De~Silva and R.~Ghrist}, {\em Coordinate-free coverage in sensor
  networks with controlled boundaries via homology}, International Journal of
  Robotics Research, 25 (2006), pp.~1205--1222,
  \url{https://doi.org/10.1177/0278364906072252}.

\bibitem{de2007coverage}
{\sc V.~de~Silva and R.~Ghrist}, {\em Coverage in sensor networks via
  persistent homology}, Algebraic and Geometric Topology, 7 (2007),
  pp.~339--358, \url{https://doi.org/10.2140/agt.2007.7.339}.

\bibitem{DeSilva2007homological}
{\sc V.~De~Silva and R.~Ghrist}, {\em Homological sensor networks}, Notices of
  the American Mathematical Society, 54 (2007), pp.~10--17.

\bibitem{DOrsogna2006self}
{\sc M.~D'Orsogna, Y.~Chuang, A.~Bertozzi, and L.~Chayes}, {\em Self-propelled
  particles with soft-core interactions: {P}atterns, stability, and collapse},
  Physical Review Letters, 96 (2006),
  \url{https://doi.org/10.1103/PhysRevLett.96.104302}.

\bibitem{EdelsbrunnerHarer}
{\sc H.~Edelsbrunner and J.~L. Harer}, {\em Computational Topology: An
  Introduction}, American Mathematical Society, Providence, 2010.

\bibitem{edelsbrunner1990simulation}
{\sc H.~Edelsbrunner and E.~P. M{\"u}cke}, {\em Simulation of simplicity: {A}
  technique to cope with degenerate cases in geometric algorithms}, ACM
  Transactions on Graphics, 9 (1990), pp.~66--104,
  \url{https://doi.org/10.1145/77635.77639}.

\bibitem{edelsbrunner1994three}
{\sc H.~Edelsbrunner and E.~P. M{\"u}cke}, {\em Three-dimensional alpha
  shapes}, ACM Transactions On Graphics (TOG), 13 (1994), pp.~43--72.

\bibitem{Elshakhs2024Delaunay}
{\sc Y.~S. Elshakhs, K.~M. Deliparaschos, T.~Charalambous, G.~Oliva, and
  A.~Zolotas}, {\em A comprehensive survey on {D}elaunay triangulation:
  {A}pplications, algorithms, and implementations over {CPU}s, {GPU}s, and
  {FPGA}s}, IEEE Access, 12 (2024), pp.~12562--12585,
  \url{https://doi.org/10.1109/ACCESS.2024.3354709}.

\bibitem{Gamble2012applied}
{\sc J.~Gamble, H.~Chintakunta, and H.~Krim}, {\em Applied topology in static
  and dynamic sensor networks}, 2012,
  \url{https://doi.org/10.1109/SPCOM.2012.6290237}.

\bibitem{ghrist2017positive}
{\sc R.~Ghrist and S.~Krishnan}, {\em Positive {A}lexander duality for pursuit
  and evasion}, SIAM Journal on Applied Algebra and Geometry, 1 (2017),
  pp.~308--327, \url{https://doi.org/10.1137/16M1089083}.

\bibitem{hall1988introduction}
{\sc P.~Hall}, {\em Introduction to the theory of coverage processes}, Wiley
  Series in Probability and Mathematical Statistics: Probability and
  Mathematical Statistics, John Wiley \& Sons, Inc., New York, 1988,
  \url{https://doi.org/10.1016/0167-0115(88)90159-0}.

\bibitem{Hatcher}
{\sc A.~Hatcher}, {\em Algebraic Topology}, Cambridge University Press,
  Cambridge, 2002.

\bibitem{igusa2002higher}
{\sc K.~Igusa}, {\em Higher {F}ranz-{R}eidemeister torsion}, vol.~31 of AMS/IP
  Studies in Advanced Mathematics, American Mathematical Society, Providence,
  RI; International Press, Somerville, MA, 2002,
  \url{https://doi.org/10.1016/s0550-3213(02)00739-3}.

\bibitem{Kellerer1983number}
{\sc A.~M. Kellerer}, {\em On the number of clumps resulting from the overlap
  of randomly placed figures in a plane}, J. Appl. Probab., 20 (1983),
  pp.~126--135, \url{https://doi.org/10.2307/3213726}.

\bibitem{kerber20133d}
{\sc M.~Kerber and H.~Edelsbrunner}, {\em 3d kinetic alpha complexes and their
  implementation}, in 2013 Proceedings of the Fifteenth Workshop on Algorithm
  Engineering and Experiments (ALENEX), SIAM, 2013, pp.~70--77.

\bibitem{distributed}
{\sc D.~Khryashchev, J.~Chu, M.~Vejdemo-Johansson, and P.~Ji}, {\em A
  distributed approach to the evasion problem}, Algorithms (Basel), 13 (2020),
  pp.~Paper No. 149, 13, \url{https://doi.org/10.3390/a13060149}.

\bibitem{La2009flocking}
{\sc H.~La and W.~Sheng}, {\em Flocking control of a mobile sensor network to
  track and observe a moving target}, 2009, pp.~3129--3134,
  \url{https://doi.org/10.1109/ROBOT.2009.5152747}.

\bibitem{Liu2005mobility}
{\sc B.~Liu, P.~Brass, O.~Dousse, P.~Nain, and D.~Towsley}, {\em Mobility
  improves coverage of sensor networks}, 2005, pp.~300--308,
  \url{https://doi.org/10.1145/1062689.1062728}.

\bibitem{Liu2013dynamic}
{\sc B.~Liu, O.~Dousse, P.~Nain, and D.~Towsley}, {\em Dynamic coverage of
  mobile sensor networks}, IEEE Transactions on Parallel and Distributed
  Systems, 24 (2013), pp.~301--311,
  \url{https://doi.org/10.1109/TPDS.2012.141}.

\bibitem{mohargraphs}
{\sc B.~Mohar and C.~Thomassen}, {\em Graphs on surfaces}, Johns Hopkins
  Studies in the Mathematical Sciences, Johns Hopkins University Press,
  Baltimore, MD, 2001.

\bibitem{Moran1974volume}
{\sc P.~A.~P. Moran}, {\em The volume occupied by normally distributed
  spheres}, Acta Math., 133 (1974), pp.~273--286,
  \url{https://doi.org/10.1007/BF02392147}.

\bibitem{Olfati-Saber2006flocking}
{\sc R.~Olfati-Saber}, {\em Flocking for multi-agent dynamic systems:
  {A}lgorithms and theory}, IEEE Transactions on Automatic Control, 51 (2006),
  pp.~401--420, \url{https://doi.org/10.1109/TAC.2005.864190}.

\bibitem{Olfati-Saber2007distributed}
{\sc R.~Olfati-Saber}, {\em Distributed {K}alman filtering for sensor
  networks}, 2007, pp.~5492--5498,
  \url{https://doi.org/10.1109/CDC.2007.4434303}.

\bibitem{Olfati-Saber2007consensus}
{\sc R.~Olfati-Saber, J.~Fax, and R.~Murray}, {\em Consensus and cooperation in
  networked multi-agent systems}, Proceedings of the IEEE, 95 (2007),
  pp.~215--233, \url{https://doi.org/10.1109/JPROC.2006.887293}.

\bibitem{Olfati-Saber2012coupled}
{\sc R.~Olfati-Saber and P.~Jalalkamali}, {\em Coupled distributed estimation
  and control for mobile sensor networks}, IEEE Transactions on Automatic
  Control, 57 (2012), pp.~2609--2614,
  \url{https://doi.org/10.1109/TAC.2012.2190184}.

\bibitem{pascucci2007robust}
{\sc V.~Pascucci, G.~Scorzelli, P.-T. Bremer, and A.~Mascarenhas}, {\em Robust
  on-line computation of {R}eeb graphs: simplicity and speed}, in ACM SIGGRAPH
  2007 papers, 2007, pp.~58--es.

\bibitem{Qu2014finite}
{\sc Y.~Qu, S.~Xu, C.~Song, Q.~Ma, Y.~Chu, and Y.~Zou}, {\em Finite-time
  dynamic coverage for mobile sensor networks in unknown environments using
  neural networks}, Journal of the Franklin Institute, 351 (2014),
  pp.~4838--4849, \url{https://doi.org/10.1016/j.jfranklin.2014.05.011}.

\bibitem{Schroeter1984distribution}
{\sc G.~Schroeter}, {\em Distribution of number of point targets killed and
  higher moments of coverage of area targets}, Naval research logistics
  quarterly, 31 (1984), pp.~373--385,
  \url{https://doi.org/10.1002/nav.3800310304}.

\bibitem{Su2016distributed}
{\sc H.~Su, X.~Chen, M.~Chen, and L.~Wang}, {\em Distributed estimation and
  control for mobile sensor networks with coupling delays}, ISA Transactions,
  64 (2016), pp.~141--150, \url{https://doi.org/10.1016/j.isatra.2016.04.025}.

\bibitem{Su2017distributed}
{\sc H.~Su, Z.~Li, and M.~Chen}, {\em Distributed estimation and control for
  two-target tracking mobile sensor networks}, Journal of the Franklin
  Institute, 354 (2017), pp.~2994--3007,
  \url{https://doi.org/10.1016/j.jfranklin.2017.01.033}.

\bibitem{vijayan1982planarity}
{\sc G.~Vijayan and A.~Wigderson}, {\em Planarity of edge ordered graphs},
  Technical Report 307, Department of Electrical Engineering and Computer
  Science, Princeton University,  (1982).

\bibitem{Yuan2019Temnothorax}
{\sc W.~Yuan, N.~Ganganath, C.-T. Cheng, Q.~Guo, and F.~Lau}, {\em Temnothorax
  albipennis migration inspired semi-flocking control for mobile sensor
  networks}, Chaos, 29 (2019), \url{https://doi.org/10.1063/1.5093073}.

\end{thebibliography}

\appendix

\section{Pseudocode}

This appendix contains pseudocode for algorithms referenced throughout the paper.
Algorithm~\ref{alg:intruder-detection-adaptive} shows how the timestepping can be performed in an adaptive manner to ensure that only one atomic combinatorial change happens at a time.
Algorithm~\ref{alg:UpdateLabel} shows a case-based algorithm, to be contrasted with the non-case-based (and much shorter) Algorithm~\ref{alg:UpdateLabel-v2}; cf.\ Section~\ref{ssec:non-state}.
Finally, Algorithm~\ref{alg:UpdateLabel-v3} shows how the assumption that the alpha complex remains connected may be relaxed while updating labels for the boundary cycles in the power-down model.

\begin{figure}[ht]
\begin{algorithm}[H]\label{alg:intruder-detection-adaptive}
\SetAlgoLined
\SetKwInOut{Input}{Input}
\SetKwInOut{Output}{Output}
\Input{Sensor positions $X_{n}$, alpha complex $A_{n}$, the boundary cycles $B_{n}$, boundary cycle labelling $L_{n}\colon B_{n} \rightarrow \{0, 1\}$, current time $t$, timestep size $\Delta t$, sensing radius $R$}
\Output{Sensor positions $X_{n+1}$, alpha complex $A_{n+1}$, the boundary cycles $B_{n+1}$, boundary cycle labelling $L_{n+1}\colon B_{n+1} \rightarrow \{0, 1\}$}
\Begin{
$X_{n+1}^{*} \leftarrow UpdatePositions(X_{n}, dt)$\;
$A_{n+1}^{*} \leftarrow AlphaComplex(X_{n}^{*}, R)$\;
$B_{n+1}^{*} \leftarrow ComputeBoundaryCycles(A_{n}^{*}, X_{n}^{*})$\;
Remove boundary cycle corresponding to the outside of fence from $B_{n+1}^{*}$\;
\eIf{Atomic Transition occurred}{
    $L_{n+1} \leftarrow LabelUpdate(B_{n}, B_{n}^{*}, A_{n}^{*}, L_{n})$ via Algorithm~\ref{alg:UpdateLabel-v2} or~\ref{alg:UpdateLabel-v3}\;
    \Return{$X_{n+1}^{*}, A_{n+1}^{*}, B_{n+1}^{*}, L_{n+1}$}
}{
    $X_{n+\frac{1}{2}}, A_{n+\frac{1}{2}}, B_{n+\frac{1}{2}}, L_{n+\frac{1}{2}} \leftarrow AdaptiveTimeStep(\frac{\Delta t}{2}, t, R, X_{n}, A_{n}, B_{n}, L_{n})$\;
    $X_{n+1}, A_{n+1}, B_{n+1}, L_{n+1} \leftarrow AdaptiveTimeStep(\frac{\Delta t}{2}, t+\frac{\Delta t}{2}, R, X_{n+\frac{1}{2}}, A_{n+\frac{1}{2}}, B_{n+\frac{1}{2}}, L_{n+\frac{1}{2}})$\;
    \Return{ $X_{n+1}, A_{n+1}, B_{n+1}, L_{n+1} $}
}
}
\caption{Single Adaptive Time-Step}
\end{algorithm}
\end{figure}

\begin{figure}[ht]
\resizebox{.95\columnwidth}{!}{
\begin{algorithm}[H]\label{alg:UpdateLabel}
\SetAlgoLined
\SetKwInOut{Input}{Input}
\SetKwInOut{Output}{Output}
\Input{Alpha complex $A_{n+1}$, the boundary cycles $B_{n}$ and $B_{n+1}$, boundary cycle labelling $L_{n}\colon B_{n} \rightarrow \{0, 1\}$, fence sensors $F$ }
\Output{The mapping $L_{n+1}\colon B_{n+1} \rightarrow \{0, 1\}$, indicating whether each boundary cycle may contain an intruder}

\Begin{
\small{
    \For{$b \in B_{n} \cap B_{n+1}$}{
        $L_{n+1} \leftarrow L_{n}$\;
    }
    \Switch{Atomic transition}{
        \Case{1-Simplex is added}{
            $x \leftarrow$ only cycle in $B_{n}\setminus B_{n+1}$ \;
            \For{$b \in B_{n+1}\setminus B_{n}$}{
                $L_{n+1}[b] \leftarrow L_{n+1}[x]$\;
            }
        }
        \Case{1-Simplex is removed}{
            $b \leftarrow$ only cycle in $B_{n+1}\setminus B_{n}$ \;
            $L_{n+1}[b] \leftarrow \bigvee\limits_{x \in  B_{n}\setminus B_{n+1}} L_{n}[x]$\;
            
        }
        \Case{2-Simplex is added}{
            $b \leftarrow$ boundary cycle associated with new simplex\;
            $L_{n+1}[b] \leftarrow$ \False \;
        }
        \Case{2-Simplex is removed}{
            \Continue \;
        }
        \Case{Simplex pair added}{
            $x \leftarrow$ only cycle in $B_{n}\setminus B_{n+1}$ \;
            \For{$b \in B_{n+1}\setminus B_{n}$}{
                \eIf{Nodes of $b$ form a 2-Simplex}{
                    $L_{n+1}[b] \leftarrow \False$\;
                }{
                    $L_{n+1}[b] \leftarrow L_{n+1}[x]$\;
                }
            }
        }
        \Case{Simplex pair removed}{
            $b \leftarrow$ only cycle in $B_{n+1}\setminus B_{n}$ \;
            $L_{n+1}[b] \leftarrow \bigvee\limits_{x \in  B_{n}\setminus B_{n+1}} L_{n+1}[x]$\;
        }
        \Case{Delaunay edge flip}{
            \For{$b \in B_{n+1}\setminus B_{n}$}{
                $L_{n+1}[b] \leftarrow \False$\;
            }
        }
    }
    \For{$b \in B_{n}\setminus B_{n+1}$}{
            $L_{n+1}[b] \leftarrow NULL$\;
        }
    \Return $L_{n+1}$\;
}
}
\caption{Update Labelling}
\end{algorithm}
}
\end{figure}

\begin{figure}[ht]
\begin{algorithm}[H]\label{alg:UpdateLabel-v3}
\SetAlgoLined
\SetKwInOut{Input}{Input}
\SetKwInOut{Output}{Output}
\Input{Alpha complex $A_{n+1}$, the boundary cycles $B_{n}$ and $B_{n+1}$, boundary cycle labelling $L_{n}\colon B_{n} \rightarrow \{0, 1\}$, fence sensors $F$ }
\Output{The mapping $L_{n+1}\colon B_{n+1} \rightarrow \{0, 1\}$, indicating whether each boundary cycle may contain an intruder}

\Begin{
    \small{
    \For{$b \in \{y \in B_{n+1} \cap B_{n} \mid \text{nodes of y are connected to fence}\}$}{
                $L_{n+1}[b] = L_{n}[b]$;
    }
    }
    \Switch{Atomic transition}{
        \Case{1-simplex added causing re-connection}{
            $V = \{ b \in B_{n}\setminus B_{n+1} \mid b \text{ is connected to } F \}$\;
            \For{$b \in B_{n+1}$}{
                \If{ $ b \text{ is connected to } F \text{ but has no label in } L_{n} $}{
                    $L_{n+1}[b] \leftarrow \bigvee\limits_{x \in V} L_{n}[x]$\;
                }\ElseIf{$b \text{ is connected to } F$ }{
                    $L_{n+1}[b] \leftarrow L_{n}[b]$\;
                }
            }
        }
        \Case{1-simplex removed causing disconnection}{
            $V = \{ b \in B_{n+1} \mid \; b \text{ is not connected to } F \text{ but has label in } L_{n}\}$\;
            \For{$b \in B_{n+1}\setminus B_{n}$}{
                \If{$b \text{ is connected to } F$}{ 
                    $L_{n+1}[b] \leftarrow \bigvee\limits_{x \in V} L_{n}[x] \vee L_{n}[b]$\;
                }{}
            }

        }
        \Other{
            \For{$b \in B_{n+1} \setminus B_{n}$}{
                \If{$b \text{ is connected to } F$}{
                    $L_{n+1}[b] \leftarrow \bigvee\limits_{x \in  B_{n}\setminus B_{n+1}} L_{n}[x]$\;
                }{}
            }
        }   
    }
    \For{$b \in B_{n+1}$}{
        \If{Nodes of $b$ form a 2-Simplex and are connected to $F$}{
            $L_{n+1}[b] \leftarrow \mathbf{False}$\;
        }{}
    }
    
    \Return $L_{n+1}$\;
}
 \caption{Update Labelling with Disconnected Components}
\end{algorithm}
\end{figure}

\FloatBarrier

\end{document}